\newtheorem{theorem}{Theorem}
\newtheorem*{theorem*}{Theorem}
\newtheorem{proposition}[theorem]{Proposition}
\newtheorem{lemma}[theorem]{Lemma}
\theoremstyle{definition}
\theoremstyle{remark}
\newtheorem{remark}[theorem]{Remark}
\newtheorem*{example*}{Example}
\newtheorem*{remark*}{Remark}
\newtheorem*{notation*}{Notation}
\newtheorem*{convention*}{Convention}
\newcommand{\myemph}{\textit}
\newcommand{\defeq}{=_\mathrm{def}}
\newcommand{\myin}{\in}
\newcommand{\cdl}[2][]{\xymatrix@1#1{#2}}
\newcommand{\map}[1]{\mathcal{#1}}
\newcommand{\mapA}{\map{A}}
\newcommand{\mapB}{\map{B}}
\newcommand{\mapM}{\map{M}}
\newcommand{\leftpitch}[1]{{#1}^\pitchfork}
\newcommand{\rightpitch}[1]{{}^\pitchfork  #1}
\newcommand{\Mpitch}{\leftpitch{\mapM}}
\newcommand{\pitchM}{\rightpitch{\mapM}}
\newcommand{\typefont}[1]{\mathrm{#1}}
\newcommand{\type}{\typefont{Type}}
\newcommand{\Id}{\typefont{Id}}
\newcommand{\refl}{\operatorname{\mathrm{r}}}
\newcommand{\myJ}{\typefont{J}}
\newcommand{\theory}[1]{\mathbb{#1}}
\newcommand{\theoryT}{\theory{T}}
\newcommand{\classarg}[1]{\mathcal{C}(#1)}
\newcommand{\classT}{\classarg{\theoryT}}
\newcommand{\Path}{\mathrm{Path}}
\newcommand{\catE}{\mathcal{E}}
\newcommand{\Gpd}{\mathbf{Gpd}}
\newcommand{\contype}{\mathrm{Cxt}}
\title{The identity type weak factorisation system}
\author[N. Gambino]{Nicola Gambino}
\address{Department of Computer Science, University of Leicester, University Road, LE1 7RH, England
and Centre de Recerca Matem\`atica, Apartat 50, E-08193 Bellaterra, Spain}
\email{nicola.gambino@gmail.com}
\author[R. Garner]{Richard Garner}
\address{Department of Mathematics, Uppsala University, Box 480, S-751 06, Uppsala, Sweden}
\email{rhgg2@cam.ac.uk}
\date{\today}
\begin{document}

\begin{abstract}
We show that the classifying category $\classT$ of a dependent type theory $\theoryT$ with axioms for identity types admits a non-trivial weak factorisation system. We provide an explicit characterisation of the elements of both the left class and the right class of the weak
factorisation system. This characterisation is applied to relate identity types and the homotopy theory of groupoids.
\end{abstract}

\maketitle

\section{Introduction}

From the point of view of mathematical logic and theoretical computer science, Martin-L\"of's  axioms for identity
types~\cite{NordstromB:marltt}  admit a  conceptually clear explanation in terms of the propositions-as-types correspondence~\cite{HowardW:foratn,MartinLofP:inttt,ScottD:conv}.
The fundamental idea behind this explanation is that, for any two elements $a, b$ of a type~$A$, we
have a new type $\Id_A(a,b)$, whose elements
are to be thought of as proofs that $a$ and $b$ are  equal.
Yet, identity types determine a highly complex structure on each type, which is far from being fully understood. A glimpse of this structure reveals itself as soon as we start applying the construction of identity types iteratively: not only do we have proofs of equality between two elements of a type, but also of proofs of equality between such proofs, and so on.
The difficulty of isolating the structure determined by identity types  is closely related to the problem of describing a satisfactory category-theoretic semantics
for them. For example,  the semantics arising from  locally cartesian closed
categories~\cite{HofmannM:intttl,SeelyR:locccc} validates not only the axioms for identity types, but also additional axioms, known as the reflection rules, which make identity types essentially trivial. To improve on this unsatisfactory situation and obtain models that do not validate the reflection rules, Awodey and  Warren have recently introduced a semantics of identity types in categories  equipped with a weak factorisation system~\cite{AwodeyS:homtmi}.

Our aim here is  to advance our understanding of the categorical structure implicit in the axioms for
identity types. We do so by providing further evidence of a close connection between the axioms for
identity types and  the notion of
a weak factorisation system. Our main result states that if~$\theoryT$ is a dependent type theory with the axioms for identity types, then its classifying category~$\classT$ admits a non-trivial weak factorisation system,  which we
shall refer to as the \myemph{identity type weak factorisation system}. This result should be regarded as
analogous to the fundamental result exhibiting the structure of a cartesian closed category on the
classifying category of  the simply-typed $\lambda$-calculus~\cite{LambekJ:inthoc,ScottD:reltlc}.
As such, it provides also a contribution to  the development of a functorial semantics~\cite{LawvereW:funs} for dependent type theories~\cite{AwodeyS:homtmi,CartmellJ:genatc,DybjerP:inttt,HofmannM:synsdt,JacobsB:catltt,MaiettiM:modcbd}.

A remarkable feature of the  identity type weak factorisation system is that its
 definition  does not involve the notion of identity types, but only a canonical class of maps
in $\classT$, generally referred to as \myemph{display maps}~\cite{JacobsB:catltt,TaylorP:prafm}. Indeed, the axioms for identity types are
used  only to verify that the appropriate axioms hold. After having established the existence of the identity type weak factorisation system, we will provide an explicit characterisation of its  classes of maps. This will lead us to two applications. The first establishes
a stability property of the identity type weak factorisation system; the second  provides
further insight into the relationship between dependent type theories with identity types and the category of groupoids, which we denote $\Gpd$.

The idea of relating identity types and groupoids dates back to the discovery of the groupoid model of type theory by Hofmann and Streicher~\cite{HofmannM:gromtt}, who showed that the axioms for  identity types allow us to construct a groupoid out of every type. We
develop this idea  in three directions. First, we generalise it by constructing a groupoid
out of every context, which we think of as a family of types, rather than out of a single type. Secondly,
we extend this construction to a functor~$\mathcal{F} : \classT \rightarrow \mathbf{Gpd}$. Finally, we use the functor $\mathcal{F}$
to relate the identity type weak factorisation system to the natural Quillen model structure on the category of
groupoids~\cite{AndersonD:fibgr,JoyalA:strscs}, by showing how the identity type weak factorisation system is mapped
into the weak factorisation system determined by injective equivalences and Grothendieck
fibrations in $\Gpd$.

\section{Identity types}
\label{sec:identitytypes}

The dependent type theories that we consider allow us to make judgements of four forms:
\begin{equation}
\label{equ:judge}
A \in \type \, ,  \quad
a \in A \, ,  \quad
A = B \in \type \, , \quad
a = b \in A \, .
\end{equation}
They assert, respectively,  that $A$ is a type, that $a$ is an element of $A$, that~$A$ and $B$
are definitionally equal types, and that $a$ and $b$ are definitionally equal elements of $A$.
We speak of \myemph{definitional equality}, rather than just equality, since the axioms for identity types will
provide us with a second notion of equality, which we will refer to as \myemph{propositional equality}.
The distinction between definitional and propositional equality plays a fundamental role for our purposes.
As usual, the judgements in~(\ref{equ:judge}) may  also be made relative to \myemph{contexts},
which consist of lists of variable declarations of the form
\begin{equation}
\label{equ:context}
\Phi = \big( x_0 \myin A_0,  x_1 \in A_1(x_0), \ldots, x_n \myin A_n(x_0, \ldots, x_{n-1} )\big) \, .
\end{equation}
For the context in~(\ref{equ:context}) to be well-formed, it is necessary that the variables $x_0, \ldots, x_n$ are distinct, and that the following sequence of judgements is derivable
\begin{gather*}
 A_0 \in \type \, ,  \\
( x_0 \in A_0)  \ A_1(x_0) \in \type \, , \\
\dots   \\
( x_0 \in A_0 \, , \ldots \, , x_{n-1} \in A_{n-1}(x_0, \ldots, x_{n-2}))   \ A_n(x_0, \ldots, x_{n-1}) \in \type \, .
\end{gather*}
Whenever we mention contexts, we implicitly assume that they are well-formed. We write $(\Phi) \  \mathcal{J}$ to express that a judgement $\mathcal{J}$ holds under the assumptions
 in~$\Phi$. The axioms for a dependent type theory will be stated here as deduction rules of the form
\[
\begin{prooftree}
(\Phi_1) \ \mathcal{J}_1 \quad \cdots \quad ( \Phi_n) \ \mathcal{J}_n
\justifies
(\Phi) \ \mathcal{J}
\end{prooftree}
\]
From now on, we work with a fixed dependent type theory $\theoryT$. The only assumption that
we make on $\theoryT$ is that it contains the basic axioms stated in Appendix~\ref{app:rules}. 
For more information on dependent type theories,  see~\cite{MartinLofP:inttt,NordstromB:promlt}.
\medskip

  We briefly recall the definition of the classifying category $\classT$ associated to $\theoryT$. For a context $\Phi$ as in~(\ref{equ:context}) and a context $\Psi = (y_0 \in B_0, \ldots, y_m \in B_m(y_0, \ldots, y_{m-1}) )$,
a \myemph{context morphism}  $f : \Phi \rightarrow \Psi$
consists of a  sequence $f = (b_0,  \ldots, b_m)$ such that the following judgements are derivable
\begin{gather*}
(\Phi)   \  b_0 \in B_0 \, , \\
(\Phi)   \    b_1 \in B_1(b_0)  \, ,  \\
    \dots   \\
( \Phi)   \  b_m \in B_m(b_0, \ldots, b_{m-1}) \, .
\end{gather*}
We can define an equivalence relation on contexts by considering two contexts to be equivalent if they coincide
up to renaming of their free variables and up to componentwise definitional equality. Similarly,
we can define an equivalence
relation on context morphisms by considering two context morphisms from $\Phi$ to~$\Psi$
to be equivalent if they concide up to renaming of the free variables in $\Phi$ and up to pointwise definitional equality.
The classifying category~$\classT$ can then be defined as having equivalence classes of contexts as objects,
and equivalence classes of context morphisms as maps. Composition is defined via substitution, 
while identity maps are defined in the evident way~\cite{PittsA:catl}. When working 
with~$\classT$, we implicitly identify contexts and context morphisms
 up to the equivalence relations defined above, without introducing additional notation. The empty context, written $(\, )$ here, is a terminal object in~$\classT$. In what follows, we will be
 particularly interested in the class of context morphisms known as as~\myemph{display maps}~\cite{JacobsB:catltt,TaylorP:prafm}. These are  context morphisms of the form $(\Gamma, x \in A) \rightarrow \Gamma$, defined by projecting away the variable $x \in A$,  where~$\Gamma$ is a context and $A$ is a type relative to $\Gamma$. As we will see in Section~\ref{sec:idtowfs}, display maps play a crucial role in the definition of the identity type weak factorisation system.
\medskip

To work with the category $\classT$ and its slices, it will be convenient to use some of
the abbreviations for manipulating contexts developed in~\cite{deBruijnN:telmtl}. Let us consider a fixed context~$\Gamma$. For a sequence of variable declarations as in~(\ref{equ:context}),  we write $(\Gamma) \ \Phi \in \contype$
 to abbreviate the following sequence of judgements
\begin{gather*}
(\Gamma) \    A_0 \in \type \, ,  \\
(\Gamma, x_0 \in A_0) \    A_1(x_0) \in \type \, , \\
\dots   \\
(\Gamma, x_0 \in A_0,  \ldots,  x_{n-1} \in A_{n-1}( x_0, \ldots, x_{n-1}) ) \  A_n(x_0, \ldots, x_{n-1}) \in \type \, .
\end{gather*}
When these are derivable, we say that $\Phi$ is  a \myemph{dependent context} relative to~$\Gamma$.
Dependent contexts relative to the empty context are simply contexts. If we have a dependent context $\Phi$
relative to $\Gamma$, we obtain  a new context   $(\Gamma, \Phi)$ by concatenation, and an evident map $(\Gamma, \Phi) \rightarrow \Gamma$, projecting away the variables in $\Phi$.
Maps of this form will be referred to as~\myemph{dependent projections}. Note that the class of dependent projections can be seen as  the closure under composition
of the class of display maps.
 For a context~$\Phi$ relative to~$ \Gamma$, as above, and a sequence $a = (a_0, a_1, \ldots, a_n)$, we write~$(\Gamma) \ a \in \Phi$ to abbreviate the following sequence
of judgements
\begin{gather*}
 ( \Gamma) \         a_0 \in A_0 \, , \\
( \Gamma) \  a_1 \in A_1(a_0)  \, , \\
    \dots   \\
 ( \Gamma) \       a_n \in A_n(a_0, \ldots, a_{n-1}) \, .
\end{gather*}
When these can be derived, we say that $a$ is a \myemph{dependent element} of $\Phi$ relative
to~$\Gamma$. Dependent elements of this form are the same thing as context morphisms~$a : \Gamma \rightarrow (\Gamma, \Phi)$ over $\Gamma$.
When $\Gamma$ is the empty context,  we speak of \myemph{global elements} of the context $\Phi$,
which are the same thing as context morphisms $a : (\, ) \rightarrow \Phi$.
The expressions $(\Gamma) \ \Phi \in \contype$ and  $(\Gamma) \  a \in \Phi$ should be understood as counterparts of the first and second judgement in~(\ref{equ:judge}). It is also possible to introduce expressions
$(\Gamma) \ \Phi = \Psi \in \contype$ and $(\Gamma) \ a  = b \in \Phi$ that correspond to the third and fourth  judgement in~(\ref{equ:judge}), respectively, so that the evident counterparts of the rules
in Appendix~\ref{app:rules} hold. The details are essentially straightforward, and hence omitted.

\begin{remark}  \label{thm:notation}
Let $\Phi$ be a context, or a dependent context,   as in~(\ref{equ:context}). We also write~$(x \in \Phi)$ to denote $\Phi$ itself. We  then write $(x \in \Phi  \, ,  y \in \Phi)$ for the result of duplicating $\Phi$ and renaming the variables in
the second copy  to avoid clashes, so as to obtain the context
\begin{multline*}
\big( x_0 \myin A_0,  x_1 \in A_1(x_0), \ldots, x_n \myin A_n(x_0, \ldots, x_{n-1}), \\
y_0 \myin A_0,  y_1 \in A_1(y_0), \ldots, y_n \myin A_n(y_0, \ldots, y_{n-1}) \big) \, .
\end{multline*}
\end{remark}

The  axioms for identity types  are stated in Table~\ref{tab:gene}.
The axiom in~(\ref{equ:form}) is referred to as the \myemph{formation rule} for identity types.
It asserts that if $A$ is a type and $a, b$ are elements of $A$, then $\Id_A(a, b)$ is a type.
We omit the subscript in expressions of the form $\Id_A(a,b)$  when no
confusion arises.
If there exists an element~$p \in \Id(a,b)$, then we say that $a$ and~$b$ are~\myemph{propositionally equal}.
The axiom  in~(\ref{equ:intro}) is referred to as the \myemph{introduction rule} for identity types. Elements of the form
$\refl(a)  \in \Id(a, a)$  are referred
to as \myemph{reflexivity elements}. The axioms  in~(\ref{equ:eli}) and (\ref{equ:comp}) are referred to as the
\myemph{elimination rule} and the  \myemph{computation rule}, respectively. For brevity, we have omitted
from their premisses the judgement
\[
\big(x \myin A, y \myin A, u \myin \Id(x,y),  \Theta(x, y, u)  \big)  \
C(x, y, u)   \myin \type \, .
\]
In these rules we are assuming  that $\Theta(x, y, u)$ is a dependent context relative to the context
$( x \myin A, y \myin A, u \myin \Id(x,y))$. We have highlighted the
variables~$x, y, u$ in~$\Theta(x,y, u)$ in order to describe their role in the deduction rules without
using substitution. All the axioms in Table~\ref{tab:gene} should be understood as being relative to a
context~$\Gamma$ that is common to both the premisses and the conclusion of the rules, which we
leave implicit for brevity. Hence, all the constructions that we perform in $\classT$
may equally well take place in one of its slices.  The presence of the implicit context $\Gamma$
plays a role only when stating three axioms, which we also assume as being part of the rules for identity types, expressing commutation laws between the syntax of identity types and the substitution operation. The first of these axioms is

\[
\begin{small}
\begin{prooftree}
(\Gamma, x \in A) \ B(x) \in \type \   \ 
(\Gamma, x \in A) \ b_1(x) \in B(x) \  \ 
(\Gamma, x \in A) \  b_2(x)  \in B(x)    \ \ 
(\Gamma) \ a \in A
\justifies
(\Gamma) \ \Id_{B(x)}(b_1(x),b_2(x))[a/x] = \Id_{B(a)}(b_1(a), b_2(a)) \in \type
 \end{prooftree}
\end{small}
\medskip
\]
The second of these axioms is

\[
\begin{prooftree}
(\Gamma, x \in A) \ B(x) \in \type \quad
(\Gamma, x \in A) \ b(x) \in B(x) \quad
(\Gamma) \ a \in A
\justifies
(\Gamma) \ \refl(b(x)) [a/x] = \refl( b(a) ) \in \Id_{B(a)}( b(a), b(a)) 
 \end{prooftree}
\medskip
\]
The third axiom, which we do not spell out, asserts a similar commuation property for terms of the form $\myJ(a,b,p, d)$. The assumption of these axioms ensures that all the constructions that we perform, when regarded as taking place in  one of the slices of $\classT$, are stable under pullbacks.

\medskip

\begin{table}[htb]
\begin{boxedminipage}{14cm}
\begin{small}
\medskip

\begin{equation}
\label{equ:form}
\begin{prooftree}
  A \myin \type  \qquad
 a \myin A \qquad
   b \myin A
\justifies
  \Id_A(a,b) \myin \type
\end{prooftree}
\end{equation}
\medskip

\begin{equation}
\label{equ:intro}
\begin{prooftree}
 a \myin A
\justifies
   \refl(a)  \myin \Id_A(a,a)
\end{prooftree}
\end{equation}
\medskip

\begin{equation}
\label{equ:eli}
\begin{prooftree}
 p \myin \Id_A(a,b) \qquad
( x \myin A,   \Theta(x, x, \refl(x))  \big) \   d(x)  \myin
C(x,x,\refl(x))
\justifies
( \Theta(a, b,p)  ) \ \myJ(a,b,p, d) \myin C(a,b,p)
\end{prooftree}
\end{equation}
\medskip

\begin{equation}
\label{equ:comp}
\begin{prooftree}
 a \myin A  \qquad
( x \myin A,  \Theta(x, x, \refl(x)) )  \ d(x)  \myin C(x,x,\refl(x))
\justifies
(  \Theta(a, a, \refl(a))    \  \myJ(a, a, \refl(a) , d ) =
d(a) \myin C(a,a,\refl(a))
\end{prooftree}
\end{equation}
\medskip

\end{small}
\end{boxedminipage}
\bigskip
\caption{Deduction rules for identity types.} \label{tab:gene} \label{tab:subst}
\end{table}

 \begin{remark} \label{rem:meta} Expressions of the form $e(e')$ denote the application  of $e$  to~$e'$ in the metatheory.  For an  expression $e$ in which the variable $x$ may appear free, we write $[x] e$ to denote the $\lambda$-abstraction  of $x$ from $e$ in the metatheory. As usual, we assume the $\eta$-rule and the $\beta$-rule in the metatheory.  Using these rules, it is possible to show that the elimination and computation  rules in~(\ref{equ:eli}) and~(\ref{equ:comp}) can  be stated equivalently as
 \[
 \begin{prooftree}
 p \myin \Id_A(a,b) \qquad
( x \myin A,   \Theta(x, x, \refl(x))  \big) \   e  \myin
C(x,x,\refl(x))
\justifies
( \Theta(a, b,p)  ) \ \myJ(a,b,p, [x] e) \myin C(a,b,p)
\end{prooftree}
\]
and
\[
\begin{prooftree}
 a \myin A  \qquad
( x \myin A,  \Theta(x, x, \refl(x)) )  \ e  \myin C(x,x,\refl(x))
\justifies
(  \Theta(a, a, \refl(a))    \  \myJ(a, a, \refl(a) , [x] e ) =
e[a/x]  \myin C(a,a,\refl(a))
\end{prooftree}
\medskip
\]
where $e[a/x]$ denotes the result of substituting $a$ for $x$ in $e$. We write $[\_] e$ to denote the
$\lambda$-abstraction of a variable that does not appear in $e$.
 \end{remark}

\begin{remark}
The elimination and computation rules, as stated in Table~\ref{tab:gene},  generalise the standard elimination and computation rules for identity types~\cite{NordstromB:marltt}. The latter can be obtained from the former by restricting the context $\Theta(x, y, u)$ to be  empty.  The reason for adopting the generalised rules instead of the standard ones is related to our preference for working without assuming the axioms for $\Pi$-types.  Without
$\Pi$-types, the standard rules are quite weak, since they do not seem to imply the Leibniz rule for
propositional equality~\cite{NordstromB:marltt}, whereas our generalised rules suffice, as shown in Lemma~\ref{thm:leibniz}. Furthermore, the generalised rules become derivable from the standard ones in the presence of $\Pi$-types, so that  our development applies also to dependent type theories with standard axioms for identity types and $\Pi$-types.
 \end{remark}

 \begin{remark}
We do not assume the rules in~(\ref{equ:refl}), to which we refer  as the \myemph{reflection rules}.
\begin{equation}
\label{equ:refl}
\begin{array}{cc}
\begin{prooftree}
 p \myin \Id(a,b)
\justifies
 a = b \myin A
\end{prooftree} \qquad &
\begin{prooftree}
 p \myin \Id(a, b)
\justifies
  \refl(a)  = p \myin \Id(a,b)
\end{prooftree}
\end{array}
\medskip
\end{equation}
The first reflection rule was shown to be independent from the axioms for identity types in~\cite{HofmannM:gromtt}.  Note that the judgement $\refl(a)  \myin \Id(a,b)$, that is presupposed by the conclusion of the second reflection rule, is derivable by the first reflection rule and the standard rules
concerning substitution, as given in Appendix~\ref{app:rules}.
The reflection rules are generally avoided, since they imply that propositional equality  and definitional equality collapse into equivalent notions, which has the effect of destroying the decidability of type-checking, one of the fundamental properties of dependent type  theories~\cite{HofmannM:extcit}.
As shown in~\cite[Proposition 10.1.3]{JacobsB:catltt} extending~\cite[Theorem~1.1]{StreicherT:inviit},
the reflection rules are equivalent to the following rule

\begin{equation*}
\label{equ:eta}
\begin{prooftree}
 p \myin \Id(a,b) \ \
( x \in A, y \myin A, u \myin \Id(x,y),  \Theta(x, y, u) )  \ e(x,y,u) \myin C(x, y, u)
\justifies
 ( \Theta(a,b,p)  )  \  J(a,b,p, [x] e(x,x,\refl(x))) = e(a,b,p)  \in C(a,b,p)
\end{prooftree}
\medskip
\end{equation*}
where we used the notation for  $\lambda$-abstractions in the metatheory explained in Remark~\ref{rem:meta}. If the  computation rule for identity types in~(\ref{equ:eli})  is understood as a version of the~$\beta$-rule, then this  rule can be understood as a version of the $\eta$-rule.
\end{remark}

One fundamental fact for our development is that, as shown
in~\cite{GarnerR:twodmt}, the axioms for identity types can be used to
construct what we will refer to as \myemph{identity contexts}. More precisely,
there are explicit definitions (which we sketch below) such that all the rules
in Table~\ref{tab:ctxid} are derivable. Because of their similarity with the
axioms for identity types, we refer to~(\ref{equ:form2}) as the
\myemph{formation rule}, to~(\ref{equ:intro2}) as the \myemph{introduction
rule}, to~(\ref{equ:eli2}) as the \myemph{elimination rule}, and
to~(\ref{equ:comp2}) as the \myemph{computation rule} for identity contexts.
When stating these rules, we leave again implicit a context $\Gamma$, to which
all the notions are assumed to be relative. For example, in the introduction
rule $\Phi$ may be assumed to be a context relative to $\Gamma$. When stating
the elimination and computation rules, we are assuming that $\Phi$ has the form
in~(\ref{equ:context}) and using the notational conventions set in
Remark~\ref{thm:notation}. For a context~$\Phi$ and $a , b \in \Phi$, we refer
to a context of the form $\Id_\Phi(a,b)$ as an \myemph{identity context}.   As
before,  we have omitted the judgement
\[
\big( x \in \Phi, y \in \Phi,  u \in \Id_\Phi(x,y),  \Theta(x, y, u) \big) \  \Omega(x, y, u)  \myin \contype
\]
from the premisses of the elimination and computation rules. From now on, we
omit the subscript from expressions of the form $\Id_\Phi(a,b)$ if no confusion
arises. It will be convenient to  fix some terminology. When we use  the
elimination rule as in Table~\ref{tab:ctxid}, we will say that we are applying
the elimination rule on~$p \myin \Id(a, b)$. We refer to the relative
context $\Omega(x, y, u)$  as the \myemph{eliminating context}, and to~$d$  as
the \myemph{eliminating family}.

\medskip

\begin{table}[hbt]
\begin{boxedminipage}{13.4cm}

\medskip

\begin{equation}
\label{equ:form2}
\begin{prooftree}
\Phi \in \contype \quad
a \myin \Phi \quad
 b  \myin \Phi
\justifies
  \Id_\Phi(a,b) \myin \contype
\end{prooftree}
\end{equation}
\medskip

\begin{equation}
\label{equ:intro2}
\begin{prooftree}
a \myin \Phi
\justifies
  \refl(a)  \myin \Id_\Phi(a,a)
\end{prooftree}
\end{equation}
\medskip

\begin{equation}
\label{equ:eli2}
\begin{prooftree}
 p \myin \Id_\Phi(a,b) \quad
( x \in \Phi,
\Theta(x,x, \refl(x))  \   d(x) \in  \Omega(x,x, \refl(x))
 \justifies
(  \Theta(a, b, p)  ) \  \myJ(a,b, p, d) \myin \Omega(a, b, p)
\end{prooftree}
\end{equation}
\medskip

\begin{equation}
\label{equ:comp2}
\begin{prooftree}
 a \myin \Phi \quad
( x \in \Phi,  \Theta(x,x, \refl(x)))   \ d(x) \myin \Omega(x,x, \refl(x))
\justifies
(  \Theta(a, a,  \refl(a) ))  \
\myJ(a,a, \refl(a), d) = d(a) \myin \Omega(a,a, \refl(a))
\end{prooftree}
\end{equation}
\medskip

\end{boxedminipage}
\medskip
\caption{Deduction rules for identity contexts.} \label{tab:ctxid}
\end{table}

We construct the identity context of a context $\Phi$ by induction on its
length. For a context of length $n =1$, the definition is straightforward: if
$\Phi = (x \in A)$, then elements of $\Phi$ are the same as elements of the
type $A$, and the identity context corresponding to $a, b \in \Phi$ is given by
\[
\Id_\Phi(a,b) = ( u \in \Id_A(a,b) ) \, .
\]
The introduction rule~(\ref{equ:intro2}) then reduces to the identity type
introduction rule~(\ref{equ:comp}); whilst the elimination
rule~(\ref{equ:eli2}) may be derived from the identity type elimination
rule~(\ref{equ:eli}) by induction on the length of the context $\Omega$.
Assuming we know how to define the identity contexts associated to contexts of
length~$n$, we can  describe the identity context associated to a context of
length~$n+1$ as follows. First we use the inductive hypothesis to prove
Lemma~\ref{thm:leibniz} below. This Lemma states a very useful property, which
we refer as the \myemph{Leibniz rule for contexts}, that is going to be used
repeatedly in what follows. To prove it, we make use of the elimination and
computation rules~(\ref{equ:eli2}) and~(\ref{equ:comp2}) for contexts of 
length~$n$.

\begin{lemma}  \label{thm:leibniz} \label{thm:leibnizcomp}  For each context $\Phi$ of length
$n$, we can derive a rule of the form

\[
\begin{prooftree}
  p \in \Id(a,b) \qquad
  (x \in \Phi) \ \Omega(x) \in \contype \qquad
 e \myin \Omega(a)
\justifies
p_!(e) \myin \Omega(b)
\end{prooftree}
\]
such that

\[
\begin{prooftree}
a \myin \Phi \quad
e \myin \Omega(a)
\justifies
(\refl(a))_{!}(e) = e \myin \Omega(a)
\end{prooftree}
\medskip
\]
holds.
\end{lemma}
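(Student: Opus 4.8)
The plan is to read off the Leibniz rule as a single instance of the elimination and computation rules for identity contexts (Table~\ref{tab:ctxid}), applied to $p \myin \Id_\Phi(a,b)$ with the eliminating family taken to be an identity substitution. Intuitively, ``transport along $p$'' should be the dependent element produced by eliminating $p$ into the identity; since $\refl(a)$ is the canonical element of $\Id_\Phi(a,a)$, the computation rule will immediately force transport along $\refl(a)$ to be the identity. Throughout, $\Phi$ is a fixed context of length~$n$, and I may use the rules~(\ref{equ:eli2}) and~(\ref{equ:comp2}) for $\Id_\Phi$, which are available for contexts of this length as part of the inductive construction of identity contexts.

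Concretely, given $(x \myin \Phi)\ \Omega(x) \myin \contype$, I would apply the elimination rule~(\ref{equ:eli2}) on $p \myin \Id_\Phi(a,b)$ with the following data: for the auxiliary context take $\Theta(x,y,u)$ to be $\Omega(x)$, weakened so as to ignore the variables $y \myin \Phi$ and $u \myin \Id_\Phi(x,y)$; for the eliminating context (the ``$\Omega$'' of the displayed rule~(\ref{equ:eli2})) take $\Omega(y)$, obtained from $\Omega(x)$ by renaming $x$ to $y$ and weakening; and for the eliminating family $d$ take the identity substitution, that is, $d(x) \myin \Omega(x)$ relative to $(x \myin \Phi, \Omega(x))$ given by the list of variables of $\Omega(x)$. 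This typechecks because both $\Theta(x,x,\refl(x))$ and the eliminating context evaluated at $(x,x,\refl(x))$ equal $\Omega(x)$. The conclusion of~(\ref{equ:eli2}) then yields a dependent element $\myJ(a,b,p,d)$ of $\Omega(b)$ relative to $\Theta(a,b,p) = \Omega(a)$, and I define $p_!(e) := \myJ(a,b,p,d)[e]$ by substituting a given $e \myin \Omega(a)$. For the second clause, applying the computation rule~(\ref{equ:comp2}) to the same data gives $\myJ(a,a,\refl(a),d) = d(a) \myin \Omega(a)$; since $d(a)$ is again the identity substitution on $\Omega(a)$, substituting $e$ yields $(\refl(a))_!(e) = \myJ(a,a,\refl(a),d)[e] = e \myin \Omega(a)$, as required.

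I do not expect a genuine obstacle; the only care points are bookkeeping. One must keep the $\Omega$ of the present lemma distinct from the ``$\Omega$'' naming the eliminating context in~(\ref{equ:eli2}), and check that the weakening of $\Omega(x)$ across the new variables and the renaming $x \mapsto y$ are legitimate operations on dependent contexts --- both are routine. One should also verify that $p_!(e)$, so defined, is stable under substitution into the implicit ambient context $\Gamma$; this follows from the commutation laws between $\myJ$, $\refl$ and substitution assumed in Section~\ref{sec:identitytypes}, together with the corresponding commutation properties of the identity-context eliminator, which are inherited from those for identity types through the inductive definition of $\Id_\Phi$. In short, the lemma records precisely that the eliminator for identity contexts, instantiated at the identity substitution, is a transport operation, with the computation rule~(\ref{equ:comp2}) supplying the stated equation.
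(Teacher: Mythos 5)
Your proposal is correct and matches the paper's own proof: both apply the elimination rule on $p \in \Id_\Phi(a,b)$ with auxiliary context $(z \in \Omega(x))$, eliminating context $\Omega(y)$, and the identity (the variable $z$ itself) as eliminating family, then define $p_!(e)$ by substituting $e$ for $z$ and read off the second clause from the computation rule. No further comment needed.
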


\begin{proof}
We use elimination over $p \myin \Id(a,b)$ with
\[
\big( x \in \Phi, y \in \Phi, u \in \Id(x,y), z \in \Omega(x) \big) \
\Omega(y)  \in \contype
\]
as the eliminating context. Since we have
\[
(x \in \Phi, z \in \Omega(x)) \ z \in \Omega(x)
\]
the elimination rule allows us to derive
\[
(z \in\Omega(a)) \ \myJ(a,b,p,[x]  z) \in \Omega(b) \, .
\]
The required term $p_{!}(e)$ is defined as the result of substituting  $e \myin
\Omega(a)$ for~$z \myin \Omega(a)$ in the expression~$\myJ(a,b,p, [x]  z)$, so
that
\[
p_{!}(e) =  \myJ(a,b,p, [x]  e) \in \Omega(b)
\]
The second rule  is an immediate consequence of this definition and the
computation rule.
\end{proof}

We now define the identity context of a context of length $n+1$. Let us assume
to have a context $\Phi$ of length $n$ as in~(\ref{equ:context}), and consider
a context $\Phi'$ of length $n+1$ of the form
\begin{equation}
\label{equ:nplus}
\Phi' =  (x \in \Phi,  x_{n+1} \in A_{n+1}(x)) \, .
\end{equation}
By definition, elements $a', b' \in \Phi'$ have the form $a' = (a, a_{n+1})$
and $ b' = (b, b_{n+1})$, where $a, b \in \Phi$, $a_{n+1} \in A_{n+1}(a)$, and
$b_{n+1} \in A_{n+1}(b)$. We define their associated identity context to have
the form
\[
\Id(a',b') = \big( u \in \Id_{\Phi}(a,b), u_{n+1} \in \Id_{A_{n+1}(b)}(u_{!}(a_{n+1}), b_{n+1})  \big)
\]
where we have $u_{!}(a_{n+1}) \in A_{n+1}(b)$ by the following application of
the Leibniz rule
\[
\begin{prooftree}
  u \in \Id_\Phi(a,b) \qquad
  (x \in \Phi) \ A_{n+1}(x) \in \type \qquad
 a_{n+1} \myin A_{n+1}(a)
\justifies
u_{!}(a_{n+1}) \myin A_{n+1}(b)
\end{prooftree}\ \ \text.
\medskip
\]
This gives us the formation rule~(\ref{equ:form2}) for $\Id_{\Phi'}$. The corresponding introduction, elimination and computation rules are defined in
a similar inductive manner. Since in the following we will need only the
description of the identity contexts and the rules in Table~\ref{tab:ctxid}, we
omit the remaining details, which may be found in~\cite{GarnerR:twodmt}.

\section{The fundamental groupoid of a context}
\label{sec:fungpd}

The axioms for identity types allow us  to think of a context $\Phi$ as  a space, as we explain below.
To emphasize this perspective, we refer to elements of $\Phi$ as \myemph{points}. Given two
points~$a, b \in \Phi$, we refer to elements of
$\Id(a,b)$ as \myemph{paths} from $a$ to $b$. Following this
idea, the context~$\Id(a, b)$ can  be thought of as the space of all
paths from $a$ to $b$.  To
speak of homotopies between paths, it suffices to apply the
formation rule as follows:

\[
\begin{prooftree}
 p_0 \in \Id(a, b) \quad
 p_1 \in \Id(a,b)
\justifies
 \Id(p_0,p_1) \in \contype
\end{prooftree}
\medskip
\]
Elements $\theta \in \Id(p_0,p_1)$ will be referred to as \myemph{homotopies} from $p_0$ to
$p_1$, and two paths $p_0 \in \Id(a,b)$ and $p_1 \in \Id(a,b)$ will be said to
be~\myemph{homotopic} if there exists a homotopy between them.
Our aim is to pursue this analogy and to define a groupoid having the
points of~$\Phi$ as objects and  homotopy equivalence classes of paths as maps.
Because of the analogy with the definition of the fundamental groupoid of a
space~\cite{MayP:concat}, we refer to this groupoid as the
\myemph{fundamental groupoid} of a context. \medskip

Lemma~\ref{thm:sym} below can the
understood as expressing that paths can be composed, that there is a trival
path on each point, and that paths can be reversed.
From now on, we assume to work with a fixed context~$\Phi$ as in~(\ref{equ:context}).
We use freely the notation introduced in Remark~\ref{thm:notation}.

\begin{lemma} \label{thm:sym} \label{thm:trans}  We can derive rules of the form

\[
\begin{array}{ccc}
\begin{prooftree}
p \in \Id(a,b) \quad
q \in \Id(b,c)
\justifies
q \circ p \in \Id(a,c)
\end{prooftree} \qquad &
\begin{prooftree}
a \in \Phi
\justifies
1_{a} \in \Id(a,a)
\end{prooftree} \qquad &
\begin{prooftree}
p \in \Id(a,b)
\justifies
p^{-1} \in \Id(b,a)
\end{prooftree}
\end{array}
\medskip
\]
such that
\[
\begin{array}{cc}
\begin{prooftree}
p \in \Id(a,b)
\justifies
1_{b} \circ p  = p \in \Id(a, b)
\end{prooftree} \qquad &
\begin{prooftree}
a \in \Phi
\justifies
(1_{a})^{-1} =  1_{a}  \in \Id(a, a)
\end{prooftree}
\end{array}
\]
hold.
\end{lemma}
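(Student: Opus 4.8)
The plan is to derive all three operations as instances of the Leibniz rule for contexts (Lemma~\ref{thm:leibniz}), with a judicious choice of dependent context in each case, and then to read off the two displayed equations from the accompanying computation rule $(\refl(a))_!(e) = e$. Concretely: take $1_a \defeq \refl(a)$, so that the middle rule is just the introduction rule~(\ref{equ:intro2}); define $q \circ p \defeq q_!(p)$ by transporting $p$ along the dependent context $\Id_\Phi(a, -)$; and define $p^{-1} \defeq p_!(\refl(a))$ by transporting $\refl(a)$ along the dependent context $\Id_\Phi(-, a)$.

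In more detail, fix $a, b, c \in \Phi$. Using the formation rule~(\ref{equ:form2}) one checks that $\Omega(x) \defeq \Id_\Phi(a,x)$ is a dependent context relative to $\Phi$, i.e.\ $(x \in \Phi) \ \Omega(x) \in \contype$, with $\Omega(b) = \Id(a,b)$ and $\Omega(c) = \Id(a,c)$. Given $p \in \Id(a,b)$ and $q \in \Id(b,c)$, I apply Lemma~\ref{thm:leibniz} to the path $q \in \Id(b,c)$, the dependent context $\Omega$, and the element $p \in \Omega(b)$; this produces $q_!(p) \in \Omega(c) = \Id(a,c)$, and I set $q \circ p \defeq q_!(p)$. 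Dually, $\Omega'(x) \defeq \Id_\Phi(x, a)$ is a dependent context relative to $\Phi$ with $\Omega'(a) = \Id(a,a)$ and $\Omega'(b) = \Id(b,a)$; given $p \in \Id(a,b)$, Lemma~\ref{thm:leibniz} applied to $p$, the dependent context $\Omega'$, and the element $\refl(a) \in \Omega'(a)$ yields $p_!(\refl(a)) \in \Omega'(b) = \Id(b,a)$, and I set $p^{-1} \defeq p_!(\refl(a))$.

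Finally, the two equations follow immediately from the computation clause of Lemma~\ref{thm:leibniz}. For the unit law, $1_b \circ p = (\refl(b))_!(p)$ by definition, and the computation rule applied at the point $b \in \Phi$ with the element $p \in \Omega(b)$ gives $(\refl(b))_!(p) = p \in \Id(a,b)$. For the other law, $(1_a)^{-1} = (\refl(a))_!(\refl(a))$ by definition, and the computation rule applied at the point $a \in \Phi$ with the element $\refl(a) \in \Omega'(a)$ gives $(\refl(a))_!(\refl(a)) = \refl(a) = 1_a \in \Id(a,a)$. The only slightly delicate point is the bookkeeping behind the assertion that $\Id_\Phi(a,-)$ and $\Id_\Phi(-,a)$ are dependent contexts relative to $\Phi$ --- that is, making precise, via Remark~\ref{thm:notation} and weakening, what it means to let one argument of $\Id_\Phi$ range over $\Phi$ --- but this is routine, and I anticipate no genuine obstacle; one could just as well run path induction by hand, but channelling everything through the Leibniz rule keeps these definitions uniform with their later uses.
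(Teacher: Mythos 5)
Your proposal is correct, and for the first two rules it coincides with the paper's proof: $q\circ p\defeq q_!(p)$ via the Leibniz rule applied to the dependent context $\Id(a,x)$, and $1_a\defeq\refl(a)$ via the introduction rule, with the unit law $1_b\circ p=p$ falling out of the computation clause of Lemma~\ref{thm:leibniz} exactly as you say. The one place you genuinely diverge is the inverse: the paper applies the elimination rule directly to $p\in\Id(a,b)$ with the two-variable eliminating context $(x\in\Phi,y\in\Phi,u\in\Id(x,y))\ \Id(y,x)$ and eliminating family $\refl(x)$, setting $p^{-1}\defeq\myJ(a,b,p,[x]\refl(x))$, whereas you transport $\refl(a)$ along the one-variable context $\Id(x,a)$, setting $p^{-1}\defeq p_!(\refl(a))$. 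These produce different terms (definitionally), but both land in $\Id(b,a)$ and both satisfy $(1_a)^{-1}=1_a$ definitionally by the respective computation rules, which is all the lemma (and its later uses, e.g.\ in Lemma~\ref{thm:higher} and Proposition~\ref{thm:closure}) requires. Your version has the small advantage of uniformity --- everything is routed through the Leibniz rule, so only the one eliminating-context pattern of Lemma~\ref{thm:leibniz} is ever needed --- at the cost of fixing the endpoint $a$ as a parameter rather than eliminating over both endpoints at once; the paper's version is the more standard path-induction definition of inversion. Either is acceptable here.
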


\begin{proof} For the first rule, we apply the Leibniz rule as follows

\[
\begin{prooftree}
q \in \Id(b,c) \quad
(x \in \Phi) \  \Id(a,x) \in \contype \quad
p \in \Id(a,b)
\justifies
q_{!}(p) \in \Id(a,c)
\end{prooftree}
\medskip
\]
and define $q \circ p \defeq q_{!}(p) \in \Id(a,c)$. For the second rule, we use the introduction rule,
and simply define $1_{a} \defeq \refl(a)  \in \Id(a,a)$.
For the third rule, we apply the elimination rule on $p \in \Id(a,b)$, with
\[
(x \in \Phi, y \in \Phi, u \in \Id(x, y) ) \ \Id(y,x) \in \contype
\]
as the eliminating context, and $\refl(x) \in \Id(x,x)$ as the eliminating family. Hence, we can define
\[
p^{-1} \defeq \myJ(a,b,p, [x] \refl(x))  \in \Id(b,a)
\]
The computation rule implies the other rules.
\end{proof}

\begin{lemma} \label{thm:compat}  Let $p_0, p_1 \in \Id(a,b)$ and $q_0, q_1 \in \Id(b,c)$.
We can derive rules of the form

\[
\begin{array}{cc}
\begin{prooftree}
\phi \in \Id(p_0,p_1) \quad
\psi \in \Id(q_0,q_1)
\justifies
\psi \circ \phi \in \Id(q_0 \circ p_0, q_1 \circ p_1)
\end{prooftree} \qquad &
\begin{prooftree}
\phi \in \Id(p_0,p_1)
\justifies
\phi^{*} \in \Id({p_0}^{-1},{p_1}^{-1})
\end{prooftree}
\end{array}
\medskip
\]
such that

\[
\begin{array}{cc}
\begin{prooftree}
\phi \in \Id(p_0, p_1) \quad
q \in \Id(b,c)
\justifies
1_{q} \circ \phi = \phi \in \Id(q \circ p_0, q \circ p_1)
\end{prooftree} \qquad &
\begin{prooftree}
p \in \Id(a,b)
\justifies
(1_p)^* = 1_p \in \Id(p,p)
\end{prooftree}
\end{array}
\medskip
\]
hold.
\end{lemma}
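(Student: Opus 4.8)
The plan is to treat both operations uniformly as instances of the \emph{action on homotopies} of a definable operation between contexts. Since $q\circ p$ and $p^{-1}$ were defined in Lemma~\ref{thm:sym} by explicit terms, for fixed $a,b,c\in\Phi$ they determine context morphisms $\mathrm{inv}\colon\Id_\Phi(a,b)\to\Id_\Phi(b,a)$ and, for fixed $q_0$ and $p_1$, context morphisms $q_0\circ(-)\colon\Id_\Phi(a,b)\to\Id_\Phi(a,c)$ and $(-)\circ p_1\colon\Id_\Phi(b,c)\to\Id_\Phi(a,c)$. The one auxiliary fact I need is a repackaging of the Leibniz rule: for any context morphism $f\colon\Psi\to\Psi'$ and any $w\in\Id_\Psi(s,t)$, applying Lemma~\ref{thm:leibniz} to the relative context $(x\in\Psi)\ \Id_{\Psi'}(f(s),f(x))\in\contype$ and transporting $\refl(f(s))$ along $w$ produces $f\cdot w\defeq w_!\big(\refl(f(s))\big)\in\Id_{\Psi'}(f(s),f(t))$, together with $f\cdot\refl(s)=\refl(f(s))$ from the second clause of that lemma.

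I would then set $\phi^{*}\defeq\mathrm{inv}\cdot\phi\in\Id(p_0^{-1},p_1^{-1})$, reading off $(1_p)^{*}=(\refl p)^{*}=\refl(p^{-1})=1_{p^{-1}}$ directly from the auxiliary fact. For the horizontal composite I would avoid product contexts, where identifying a path with a pair of paths is only \emph{propositional} because of the transport correction in the inductive construction of identity contexts, and instead combine two whiskerings: writing $q_0\cdot\phi\in\Id(q_0\circ p_0,\,q_0\circ p_1)$ for the result of applying $q_0\circ(-)$ to $\phi$, and $\psi\cdot p_1\in\Id(q_0\circ p_1,\,q_1\circ p_1)$ for the result of applying $(-)\circ p_1$ to $\psi$, I would define $\psi\circ\phi\defeq(\psi\cdot p_1)\mathbin{\circ}(q_0\cdot\phi)\in\Id(q_0\circ p_0,\,q_1\circ p_1)$, the outer $\circ$ being the path composition of Lemma~\ref{thm:sym} applied now in the context $\Id_\Phi(a,c)$.

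The coherence equations are then forced by the computation rule~(\ref{equ:comp2}) together with the unit law $1_b\circ p=p$ of Lemma~\ref{thm:sym}: specialising $\psi$ to $1_q=\refl(q)$ makes $\psi\cdot p_1$ equal to $\refl(q\circ p_1)$, and the unit law collapses the composite so that $1_q\circ\phi$ reduces to the single whiskering $q_0\cdot\phi$; and specialising $\phi$ to $1_p=\refl p$ gives $(1_p)^{*}=\refl(p^{-1})=1_{p^{-1}}$ as above. The only real difficulty in carrying this out is the bookkeeping: one must keep straight three nested layers of identity contexts (the context $\Phi$, then $\Id_\Phi(a,b)$ and its siblings, then the identity contexts of those), check that each whiskering has the asserted endpoints before composing them, and apply~(\ref{equ:comp2}) with exactly the right arguments held fixed. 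There is no conceptual obstacle once the action-on-homotopies construction is in place.
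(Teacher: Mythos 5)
Your construction is correct and is essentially the paper's own proof spelled out in full: the paper disposes of both rules in one line (``use elimination over $\phi$ and $\psi$, and Lemma~\ref{thm:sym}''), and your two whiskerings --- each an instance of the Leibniz rule, i.e.\ of elimination --- combined by the vertical composition of Lemma~\ref{thm:sym} are precisely a concrete implementation of that double elimination, with the computation rule firing in the right place. The only point of friction is with the statement rather than with your argument: your definition yields $1_q \circ \phi = q_0 \cdot \phi$ (a single whiskering) for the first coherence law, not the literal judgement $1_q \circ \phi = \phi$, but the latter is not even well typed ($\phi \in \Id(p_0,p_1)$ whereas $1_q \circ \phi \in \Id(q \circ p_0, q \circ p_1)$), so your version --- like your $(1_p)^* = 1_{p^{-1}}$ in place of the statement's $(1_p)^* = 1_p$ --- is the type-correct reading of what the lemma intends, and it is all that the rest of the paper uses.
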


\begin{proof} For the first rule, use elimination over
 $\phi \in \Id(p_0, p_1)$ and $\psi
\in \Id(q_0,q_1)$, and Lemma~\ref{thm:sym}. For the second rule,
use elimination over $\phi \in \Id(p_0,p_1)$ and Lemma~\ref{thm:sym}.
\end{proof}

\begin{lemma} \label{thm:higher} \label{thm:iota}  We can derive rules of the form

\medskip
\[
\begin{prooftree}
p \in \Id(a,b) \qquad
q \in \Id(b,c) \qquad
r \in \Id(c,d)
\justifies
\alpha_{p,q,r} \in \Id( (r \circ q) \circ p, r \circ (q \circ p))
\end{prooftree}
\]
\medskip
\[
\begin{array}{cc}
\begin{prooftree}
p \in \Id(a,b)
\justifies
\phi_p \in \Id(1_{b} \circ p, p)
\end{prooftree}   \qquad &
\begin{prooftree}
p \in \Id(a,b)
\justifies
\psi_p \in \Id(p \circ 1_{a},  p)
\end{prooftree}
\end{array}
\]
\medskip
\[
\begin{array}{cc}
\begin{prooftree}
p \in \Id(a,b)
\justifies
\sigma_p \in \Id(p^{-1} \circ p, 1_{a})
\end{prooftree}  \qquad &
\begin{prooftree}
p \in \Id(a,b)
\justifies
\tau_p \in \Id(p \circ p^{-1},  1_{b})
\end{prooftree}
\end{array}
\medskip
\]
such that

\[
\begin{prooftree}
p \in \Id(a,b) \qquad
q \in \Id(b,c)
\justifies
\alpha_{p, q, 1_{c}}  = 1_{q \circ p} \in \Id(q \circ p, q \circ p)
\end{prooftree}
\]
\medskip

\[
\begin{array}{cc}
\begin{prooftree}
a \in \Phi
\justifies
\phi_{1_a} =  1_{1_a} \in \Id(1_a, 1_a)
\end{prooftree}  \qquad &
\begin{prooftree}
a \in \Phi
\justifies
\psi_{1_a}  = 1_{1_a}  \in \Id(1_a, 1_a)
\end{prooftree}
\end{array}
\medskip
\]

\[
\begin{array}{cc}
\begin{prooftree}
a \in A
\justifies
\sigma_{1_a} =  1_{1_a} \in \Id(1_a, 1_a)
\end{prooftree}  \qquad &
\begin{prooftree}
a \in A
\justifies
\tau_{1_a}  = 1_{1_a}  \in \Id(1_a, 1_a)
\end{prooftree}
\end{array}
\]
hold.
\end{lemma}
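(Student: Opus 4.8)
The plan is to obtain every cell in the statement by path induction, that is, by the elimination rule~(\ref{equ:eli2}) for identity contexts, reducing the relevant path to a reflexivity element and exploiting the fact that in that degenerate case the source and target of the identity context to be inhabited become \emph{definitionally} equal. The required collapses all trace back to two earlier computation rules: the second rule of Lemma~\ref{thm:leibniz}, which gives $(\refl(a))_!(e) = e$ and hence $1_x \circ e = (\refl(x))_!(e) = e$ (a reflexivity element is left-neutral for composition); and the computation clause of Lemma~\ref{thm:sym}, which gives $(\refl(x))^{-1} = \refl(x)$. Once the endpoints agree definitionally, the witnessing cell can be taken to be a reflexivity term, and the coherence equation required of it is then a direct instance of the computation rule~(\ref{equ:comp2}).

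Three of the cells need essentially no induction. Since $1_b \circ p = (\refl(b))_!(p) = p$ holds definitionally, $\Id(1_b \circ p, p) = \Id(p,p)$, so we may set $\phi_p \defeq 1_p$, whence $\phi_{1_a} = 1_{1_a}$ on the nose. For $\sigma_p$, $\tau_p$ and $\psi_p$ I would apply elimination on $p \in \Id(a,b)$, with eliminating contexts $\Id(u^{-1} \circ u, 1_x)$, $\Id(u \circ u^{-1}, 1_y)$ and $\Id(u \circ 1_x, u)$ respectively --- each well-formed for arbitrary $x,y,u$ using only Lemma~\ref{thm:sym} and the formation rule~(\ref{equ:form2}). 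At $u = \refl(x)$ each of these contexts simplifies, via the two computation rules above, to $\Id(\refl(x), \refl(x))$; the eliminating family may therefore be taken to send $x$ to $1_{1_x} = \refl(\refl(x))$ in all three cases, and the equations $\sigma_{1_a} = \tau_{1_a} = \psi_{1_a} = 1_{1_a}$ drop out of~(\ref{equ:comp2}).

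The associativity cell $\alpha_{p,q,r}$ is slightly more delicate, since the codomain $c$ of $q$ is the source of $r$, so $q$ cannot be kept as a parameter in the eliminating context. I would apply elimination on $r \in \Id(c,d)$ with a nonempty $\Theta$, taking $\Theta(x,y,u) \defeq (v \in \Id(b,x))$ and
\[
\Omega(x,y,u,v) \defeq \Id\big( (u \circ v) \circ p, \ u \circ (v \circ p) \big) \, ,
\]
which is well-formed for all $x,y,u,v$. At $u = \refl(x)$ both composites collapse to $v \circ p$ by left-neutrality, so the eliminating family may be taken to be the reflexivity cell $1_{v \circ p}$; specialising the resulting term at $v \defeq q$ gives $\alpha_{p,q,r}$, and~(\ref{equ:comp2}) yields $\alpha_{p,q,1_c} = 1_{q \circ p}$. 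It is essential to eliminate over $r$, not over $p$ or $q$: with $r = \refl(c)$ the factor $r$ stands at the head of every composite occurring on either side of the target, and this is exactly what trivialises it.

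I expect the only real obstacle to be bookkeeping rather than mathematics: one must arrange each eliminating context to type-check for \emph{generic} endpoints --- which for $\alpha$ forces the generalisation over $v$ above --- and one must verify that the proposed eliminating families land in the definitionally simplified identity contexts, keeping careful track of which earlier computation rule is responsible for each simplification. Once these definitional equalities are in hand, every cell in the statement is literally a reflexivity term, and every coherence equation is an immediate instance of~(\ref{equ:comp2}).
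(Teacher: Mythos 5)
Your proposal is correct and follows exactly the paper's (very terse) proof: $\phi_p \defeq 1_p$ via the definitional equality $1_b \circ p = p$ from Lemma~\ref{thm:sym}, elimination over $p$ for $\psi_p$, $\sigma_p$, $\tau_p$, and elimination over $r$ for $\alpha_{p,q,r}$. Your additional observation that the associativity case requires generalising $q$ to a variable $v \in \Id(b,x)$ placed in the auxiliary context $\Theta$ is precisely the detail the paper leaves implicit, and it is handled correctly.
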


\begin{proof} For $\alpha_{p,q,r}$ use elimination over $r \in \Id(c,d)$. For
$\phi_p$ we can define $\phi_p$ to
be~$1_p$ by Lemma~\ref{thm:sym}. For $\psi_p$ use elimination over
$p \in \Id(a,b)$. For $\sigma_p$ and $\tau_p$ use elimination over $p \in \Id(a,b)$.
\end{proof}

Let $a, b \in \Phi$. An application of Lemma~\ref{thm:sym}, taking  $\Phi$ therein to be $\Id_\Phi(a,b)$,
shows that homotopy of paths is a reflexive, symmetric, and transitive relation on the set of paths
from $a$ to $b$. We write~$[a,b]$ for the quotient set,
and $[p] : a \rightarrow b$ for the equivalence class of a path $p \in \Id(a,b)$.
We can now define the fundamental groupoid~$\mathcal{F}(\Phi)$  associated to  $\Phi$. The objects of~$\mathcal{F}(\Phi)$ are the global elements of $\Phi$. The maps from $a$ to $b$ in $\mathcal{F}(\Phi)$ are equivalence classes of paths
$[p] : a \rightarrow b$. Composition, identities, and inverses in $\mathcal{F}(\Phi)$ are defined by letting
 \[
[ q ] \circ [p] \defeq [ q \circ p] \, , \quad
1_{a} \defeq [1_a] \, , \quad
 [p]^{-1} \defeq [p^{-1}] \, .
\]
These operations are well-defined by Lemma~\ref{thm:compat}. To establish the  axioms for a category, we need
to show
\begin{equation}
\label{equ:category}
[r \circ (q \circ p) ] = [ (r \circ q) \circ p] \, ,  \quad
[ 1_{b} \circ p ] = [p] \, , \quad
[p \circ 1_{a}] = [p]
\end{equation}
and to establish the additional axioms for a groupoid, we need to show
\begin{equation}
\label{equ:groupoid}
[ p^{-1} \circ p] = [1_{a}]  \, , \quad
[ p \circ p^{-1}] = [1_{b}] \, .
\end{equation}
For~(\ref{equ:category}), we need homotopies $\alpha \in \Id(r \circ (q \circ p), (r \circ q) \circ p)$,
$\phi \in \Id(1_{b} \circ p, p)$, and $\psi \in \Id(p \circ 1_{a}, p)$.
For~(\ref{equ:groupoid}), we need homotopies $\sigma \in \Id(p^{-1} \circ p, 1_{a})$
and~$\tau \in \Id(p \circ p^{-1}, 1_{b})$.  All of these are provided by Lemma~\ref{thm:higher}.
We now show that this construction extends to a functor. Recall 
that we write $\Gpd$ for the category of small groupoids and functors.

\begin{proposition} \label{thm:functor} Let $\theoryT$ be a dependent type theory with axioms for identity types.
The function mapping a context $\Phi$ to its fundamental groupoid $\mathcal{F}(\Phi)$
extends to a functor $\mathcal{F} : \classT \rightarrow \Gpd$.
\end{proposition}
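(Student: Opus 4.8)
The plan is to define the action of $\mathcal{F}$ on morphisms and then verify functoriality, with the bulk of the work being the functoriality of the assignment on arrows and its compatibility with the groupoid structure.

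First I would describe how $\mathcal{F}$ acts on a context morphism $f : \Phi \to \Psi$. Such an $f$ is a dependent element, i.e. a sequence of terms $f = (b_0, \dots, b_m)$ with $(\Phi)\ b_i \in B_i(b_0,\dots,b_{i-1})$, and substituting along $f$ sends a global element $a$ of $\Phi$ to a global element $f(a)$ of $\Psi$; this gives the object part of $\mathcal{F}(f)$. For the arrow part, I would use the Leibniz-style machinery to transport paths: given $p \in \Id_\Phi(a, a')$, substitution into $f$ does not directly act on $p$, so instead I would construct, by the elimination rule of Lemma~\ref{thm:leibniz} (or rather a context-level version of it applied to $\Id_\Psi(f(x), f(x'))$ with $x, x' \in \Phi$), a term $\Id_\Phi(a,a')$-functorial operation sending $p$ to a path $f_*(p) \in \Id_\Psi(f(a), f(a'))$. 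Concretely, apply elimination on $p \in \Id_\Phi(a, a')$ with eliminating context $(x \in \Phi, y \in \Phi, u \in \Id_\Phi(x,y))\ \Id_\Psi(f(x), f(y)) \in \contype$ and eliminating family $[x]\,\refl(f(x))$; this yields $f_*(p) \in \Id_\Psi(f(a), f(a'))$ with $f_*(\refl(a)) = \refl(f(a))$ by the computation rule. I would then set $\mathcal{F}(f)([p]) \defeq [f_*(p)]$, checking well-definedness on homotopy classes: a homotopy $\phi \in \Id(p_0, p_1)$ yields, by another application of elimination (as in Lemma~\ref{thm:compat}), a homotopy $f_*(p_0) \sim f_*(p_1)$, so classes are respected.

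Next I would verify that $\mathcal{F}(f)$ is a functor of groupoids. Preservation of identities is immediate since $f_*(\refl(a)) = \refl(f(a))$ definitionally, hence $\mathcal{F}(f)(1_a) = [f_*(1_a)] = [1_{f(a)}] = 1_{\mathcal{F}(f)(a)}$. For preservation of composition I must show $[f_*(q \circ p)] = [f_*(q) \circ f_*(p)]$ for $p \in \Id(a, b)$, $q \in \Id(b, c)$; recalling $q \circ p = q_!(p)$, this follows by eliminating on $q \in \Id(b,c)$ — when $q = \refl(b)$ both sides reduce (up to the homotopies $\phi, \psi$ of Lemma~\ref{thm:higher}) to $[f_*(p)]$ — so the general case follows from the elimination and computation rules, the point being that the two parallel operations $p \mapsto f_*(q \circ p)$ and $p \mapsto f_*(q) \circ f_*(p)$ are both built from eliminations and agree on reflexivity terms. (Compatibility with inverses then follows formally in any groupoid, or may be checked directly in the same style.)

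Finally I would check the two functoriality equations for $\mathcal{F}$ itself: $\mathcal{F}(1_\Phi) = 1_{\mathcal{F}(\Phi)}$ and $\mathcal{F}(g \circ f) = \mathcal{F}(g) \circ \mathcal{F}(f)$. On objects these are immediate from the fact that identity substitution and composite substitution behave correctly, which is part of the basic rules in the appendix. On arrows, for the identity one needs $(1_\Phi)_*(p) \sim p$, which follows by eliminating on $p$ and using the computation rule (both sides give $\refl$ on reflexivity terms); for composites one needs $(g \circ f)_*(p) \sim g_*(f_*(p))$, again by elimination on $p$, the two operations agreeing on $\refl(a)$ since substitution composes. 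The main obstacle I anticipate is purely bookkeeping: the transport operations $f_*$ and the groupoid operations $\circ$, $(-)^{-1}$ are all defined via $\myJ$, so every naturality/functoriality equation holds only up to a propositional equality (a homotopy), and one must repeatedly invoke elimination to reduce to the reflexivity case and then the computation rule — so the care lies in setting up each eliminating context correctly and in tracking that "equality of maps in $\mathcal{F}(\Phi)$" means "homotopic", not "definitionally equal". Here the stability of all constructions under substitution — guaranteed by the commutation axioms assumed for identity types — is what makes the reductions to the reflexivity case legitimate even in the presence of the ambient substitution into $f$.
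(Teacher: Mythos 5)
Your proposal is correct and follows essentially the same route as the paper: the arrow part of $\mathcal{F}(f)$ is defined by eliminating on $p$ with eliminating family $[x]\,\refl(f(x))$ (the paper's $\myJ(a,b,p,[x]\,1_{fx})$), identities are preserved definitionally by the computation rule, and all remaining functoriality equations are verified up to homotopy by further eliminations reducing to the reflexivity case. The only difference is cosmetic: in the reflexivity case of the composition check the two sides are in fact definitionally equal by Lemma~\ref{thm:sym}, so the trivial homotopy suffices and no appeal to $\phi,\psi$ of Lemma~\ref{thm:higher} is needed there.
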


\begin{proof} We need to define a functor $\mathcal{F}(f) : \mathcal{F}(\Phi) \rightarrow \mathcal{F}(\Psi)$ for every
 context morphism $f : \Phi \rightarrow \Psi$. On objects, $\mathcal{F}(f)$ sends $a \in \mathcal{F}(\Phi)$ to $f(a) \in
\mathcal{F}(\Psi)$. On maps, $\mathcal{F}(f)$ sends $[p] : a \rightarrow b$ to $[f(p)] : f(a) \rightarrow f(b)$, where
$f(p) \in \Id(f(a), f(b))$ is defined using the elimination rule by letting
\begin{equation}
\label{equ:fp}
f(p) = \myJ(a,b,p, [x] 1_{fx}) \in \Id(f(a), f(b)) \, ,
\end{equation}
so that
\begin{equation}
\label{equ:fid}
f(1_a) = 1_{fa} \in \Id(f(a), f(a)) \, .
\end{equation}
It is routine to check that the action of $\mathcal{F}(f)$ on maps is well-defined. To show that $\mathcal{F}(f)$ is a
functor amounts to verifying the equations
\[
   [f(q \circ p)] = [f(q) \circ f(p)] \, , \qquad   [f(1_a)] = [1_{f(a)}]   \, .
\]
For the first equation, elimination on $q \in \Id(b,c)$ can be used to
exhibit the required homotopy between $f(q \circ p)$ and $f(q) \circ f(p)$.  For the second equation, use~(\ref{equ:fid}).
We have therefore defined $\mathcal{F} : \classT \rightarrow \Gpd$ on objects and maps. Thus, it
remains to check that it is a functor. We begin by checking
\begin{equation*}
\mathcal{F}(g \circ f) = \mathcal{F}(g) \circ  \mathcal{F}(f) \, .
\end{equation*}
It is clear  that $\mathcal{F}(g \circ f)$ and $\mathcal{F}(g) \circ  \mathcal{F}(f)$ have the same action on objects.
To show that they coincide on maps, we need to show that we have a homotopy between
$g(f(p))$ and $(gf)(p)$ for every $p \in \Id(a,b)$. By (\ref{equ:fp}), we have
\[
g(f(p)) = \myJ(f(a), f(b), f(p), [x] \, 1_{g(x)} ) \in \Id(gf(a), gf(b))
\]
and
\[
(gf)(p) = \myJ(a, b, p, [x]  \, 1_{gf(x)}) \in \Id(gf(a), gf(b)) \, .
\]
The required homotopy can be obtained by  the elimination rule on~$p \in \Id(a,b)$. To conclude the proof, it suffices to check that
\[
\mathcal{F}(1_\Phi) = 1_{\mathcal{F}(\Phi)}   \, .
\]
As before, it is clear that $\mathcal{F}(1_\Phi)$ is the identity on objects. To check that it is the identity on maps,
it suffices to show that $[p] : a \rightarrow b$ and $[1_\Phi(p)] : a \rightarrow b$ are the same equivalence classes,
which can be proved by the elimination rule on~$p \in \Id(a,b)$.
\end{proof}

\section{The identity type  weak factorisation system}
\label{sec:idtowfs}

Let us recall the notion of a weak factorisation system~\cite{BousfieldA:confsc}. For this, we need some terminology and notation.
Let $\catE$ be a category. Given maps  $f : A \rightarrow B$ and $g : C \rightarrow D$ in $\catE$, we say that~$f$ has the \emph{left lifting property}  with respect to $g$, or that $g$ has the \emph{right lifting property}  with respect to~$f$, if every  commutative diagram of the form
\begin{equation*}
\xymatrix{
A \ar[r]^h \ar[d]_f & C \ar[d]^g \\
B \ar[r]_k   &  D }
\end{equation*}
has a \myemph{diagonal filler}, that is to say is a map $j : B \rightarrow C$ making the diagram
\begin{equation*}
\xymatrix{
A \ar[r]^h \ar[d]_f  & C \ar[d]^g \\
B \ar[r]_k \ar[ur]^{j}  & D }
\end{equation*}
commute. We write $f \pitchfork g$ to denote this situation. For a class of maps~$\mapM$, we define $\Mpitch$ to be the class of maps having the right lifting  property with respect to every map in $\mapM$.
 Similarly, we
 define $\pitchM$ to be the class of maps having the left lifting property with respect to every map in
 $\mapM$. A \emph{weak factorisation system} on  $\catE$ consists of a pair of
classes of maps $(\mapA, \mapB)$ such that the following hold.
\begin{enumerate}
\item \label{itm:fact} Every map $f $ admits a factorisation $f = pi$ with $i \myin \mapA$ and $p \myin \mapB$.
\item \label{itm:wko} $\mapA^\pitchfork = \mapB$ and $\mapA= {}^\pitchfork \mapB$.
 \end{enumerate}
We refer to (\ref{itm:fact}) as the Factorisation Axiom and to (\ref{itm:wko}) as the Weak Orthogonality Axiom.  Elements of $\mapA$ and $\mapB$ will be also referred to as $\mapA$-maps and
$\mapB$-maps, respectively. For more information on weak factorisation systems, see~\cite[Appendix~D]{JoyalA:theqca}.
 \medskip

\begin{table}[htb]
\begin{boxedminipage}{13.2cm}
\begin{footnotesize}
\bigskip
\begin{equation*}
\begin{prooftree}
 \Phi \myin \contype
\justifies
(x \in \Phi, y \in \Phi)  \ \Id_\Phi(x,y) \myin \contype
\end{prooftree}
\end{equation*}
\bigskip
\begin{equation*}
\begin{prooftree}
 \Phi \myin \contype
\justifies
(x \in \Phi) \ \refl(x) \myin \Id_\Phi(x,x)
\end{prooftree}
\end{equation*}
\bigskip
\begin{equation*}
\begin{prooftree}
( x \in \Phi,  \Theta(x, x, \refl(x))) \    d(x) \myin \Omega(x,x, \refl x)
\justifies
( x \in \Phi, y \in \Phi, u \in \Id_\Phi(x,y), \Theta(x,y,u)  ) \  \myJ(x,y,u, d) \myin
\Omega(x,y,u)
\end{prooftree}
\end{equation*}
\bigskip
\begin{equation*}
\begin{prooftree}
(  x \in \Phi,  \Theta(x, x, \refl(x))  ) \ d(x) \myin \Omega(x, x, \refl(x))
\justifies
\big(  x \in \Phi, \Theta(x, x \refl(x)) \big)  \ \myJ(x, x, \refl(x),d) = d(x) \myin \Omega(x, x, \refl(x))
\end{prooftree}
\end{equation*}
\end{footnotesize}
\end{boxedminipage}
\bigskip
\caption{Variable-based rules for identity contexts.} \label{tab:varonly}
\end{table}

Let us now return to consider the dependent type theory $\theoryT$ and  its classifying category $\classT$. Recall from Section~\ref{sec:identitytypes} that a display map is a
context morphism of the form $(\Gamma, x \in A) \rightarrow \Gamma$, obtained by forgetting
the variable $x \in A$. We write  $\mathcal{D}$  for the set of display maps in $\classT$. Our main result is the following.

\begin{theorem} \label{thm:wfs}
Let $\theoryT$ be a dependent type theory with axioms for
identity types. The pair $(\mapA, \mapB)$, where $\mapA \defeq {}^\pitchfork \mathcal{D}$
and~$\mapB \defeq \mapA^\pitchfork$, forms a weak factorisation system on $\classT$.
\end{theorem}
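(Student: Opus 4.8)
The plan is to verify the two axioms for a weak factorisation system. The Weak Orthogonality Axiom is the easy half: by construction $\mapA = \rightpitch{\mathcal{D}}$ and $\mapB = \Apitch$, so $\mapA = \rightpitch{\mapB}$ follows from the general fact that $\rightpitch{(\Apitch)} = \rightpitch{(\rightpitch{\mathcal{D}}^\pitchfork)} \supseteq \rightpitch{\mathcal{D}} = \mapA$, while the reverse inclusion $\mapA \subseteq \rightpitch{\mapB}$ is automatic since every $\mapA$-map lifts against every $\mapB$-map by definition of $\mapB$. The identity $\Apitch = \mapB$ is immediate from the definition of $\mapB$. (These are the standard ``$\rightpitch{(-)}$ and $(-)^\pitchfork$ form a Galois connection, and $\mapM \mapsto \rightpitch{(\mapM^\pitchfork)}$ is a closure operator'' observations; I would state them cleanly, perhaps citing \cite[Appendix D]{JoyalA:theqca}.) So all the content is in the Factorisation Axiom.

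For the Factorisation Axiom, the plan is to produce, for an arbitrary context morphism $f : \Phi \to \Psi$, a factorisation $f = p \circ i$ with $i \in \mapA = \rightpitch{\mathcal{D}}$ and $p \in \mapB = \Apitch$. The natural candidate is the mapping-path-space factorisation familiar from homotopy theory, built syntactically out of identity contexts. Concretely, form the context
\[
\Phi_f \defeq \big( x \in \Phi,\ y \in \Psi,\ w \in \Id_\Psi(f(x), y) \big),
\]
let $i : \Phi \to \Phi_f$ be the evident map sending $x$ to $(x,\ f(x),\ \refl(f(x)))$ — this is well-typed by the introduction rule for identity contexts — and let $p : \Phi_f \to \Psi$ be the projection $(x,y,w) \mapsto y$. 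Then $p \circ i = f$ definitionally, using the commutation axiom for $\refl$ under substitution. It remains to check that $i \in \rightpitch{\mathcal{D}}$ and that $p \in \Apitch$.

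The verification that $p \in \Apitch = \mathcal{D}^{\pitchfork\pitchfork}$ should reduce to checking that $p$ lifts against every $\rightpitch{\mathcal{D}}$-map; but it is cleaner to observe that $p$ is, up to composition with an isomorphism, a dependent projection — indeed $\Phi_f \to \Psi$ is the projection forgetting the variables $x \in \Phi$ and $w \in \Id_\Psi(f(x),y)$ — so $p$ lies in the closure of $\mathcal{D}$ under composition, hence in $\mathcal{D}^{\pitchfork\pitchfork}$ since $\mathcal{D} \subseteq \mathcal{D}^{\pitchfork\pitchfork}$ and the right-hand class of a weak factorisation system is closed under composition (this last closure being, again, a general lifting-property fact). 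Wait — more carefully: $\mathcal{D}^{\pitchfork\pitchfork} = \mathcal{D}^{\pitchfork}$ would need $\mathcal{D}^{\pitchfork}$ to already be a weak-factorisation right class, which is what we are proving; so instead I would directly show any dependent projection is in $\Apitch$ by exhibiting a diagonal filler for a square against an arbitrary $\mapA$-map, using that $\mapA$-maps lift against display maps and composing the lifts stepwise along the length of the dependent context. The genuinely substantial step — and the main obstacle — is showing $i \in \rightpitch{\mathcal{D}}$, i.e. that $i$ has the left lifting property against every display map $(\Gamma, z \in C) \to \Gamma$. Given a square with $i$ on the left and a display map on the right, one has a section-finding problem over $\Phi_f$ restricting to a prescribed section over the image of $i$; the diagonal filler must be constructed by applying the elimination rule for identity contexts on the path variable $w \in \Id_\Psi(f(x),y)$, with the given data over $\refl$ as the eliminating family, and the computation rule for identity contexts is exactly what makes the upper triangle of the lifting diagram commute (while the lower triangle commutes by construction). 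Setting up the eliminating context $\Omega$ correctly — so that an element of $\Omega$ over a point of $\Phi_f$ is precisely a lift of that point through the display map compatible with the square — is the delicate bookkeeping, and is where the generalised (context-indexed $\Theta$) form of the elimination rule, together with the Leibniz rule of Lemma~\ref{thm:leibniz}, does the work.
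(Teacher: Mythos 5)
Your overall architecture matches the paper's: the mapping-path-space factorisation through $\Id(f) = (x \in \Phi,\ y \in \Psi,\ u \in \Id_\Psi(fx,y))$ is exactly the one used in Lemma~\ref{thm:factone}; the observation that $p_f$ is (up to reordering variables) a dependent projection, hence in $\mapB$ because $\mathcal{D} \subseteq \Apitch$ and right lifting classes are closed under composition, is correct; and you are right that the Weak Orthogonality Axiom is formal. (Note, though, that your two displayed inclusions are the same one, $\mapA \subseteq \rightpitch{\mapB}$, written twice; the other inclusion $\rightpitch{\mapB} \subseteq \mapA$ follows from $\mathcal{D} \subseteq \mapB$ and antitonicity of $\rightpitch{(-)}$. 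Both are indeed instances of the Galois-connection identities you invoke, so no real harm done.)

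The gap is in the one step you yourself flag as the main obstacle: showing $i_f \in \rightpitch{\mathcal{D}}$. Your plan is to ``apply the elimination rule for identity contexts on the path variable $w \in \Id_\Psi(f(x),y)$'', but rule~(\ref{equ:eli2}) cannot be applied to $w$ as it stands: it eliminates a path whose eliminating context $\Omega(y_0,y_1,v)$ must be defined over generic endpoints $y_0, y_1 \in \Psi$, whereas the family $C(x,y,u)$ you need to inhabit lives only over $\Id(f)$, i.e.\ only when the left endpoint is the non-variable term $f(x)$. There is thus no legal eliminating context of the kind your sketch presupposes; what you are implicitly using is the \emph{based} (Paulin--Mohring) form of path induction, which is not among the stated rules and whose derivation is precisely the content of the paper's proof. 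The workaround there is to eliminate on an auxiliary path $v \in \Id_\Psi(y_0,y_1)$ between two \emph{fresh} variables of $\Psi$, placing $u \in \Id(fx,y_0)$ and $z \in C(x,y_0,u)$ into the $\Theta$-slot of the generalised rule so as to transport $z$ to an element of $C(x,y_1,v\circ u)$; specialising $y_0 := fx$, $v := u$, $u := 1_{fx}$, $z := d(x)$ then yields an element of $C(x,y,u\circ 1_{fx})$ rather than of $C(x,y,u)$, and a second transport along the coherence homotopy $\psi_u \in \Id(u\circ 1_{fx},u)$ of Lemma~\ref{thm:higher}, via the Leibniz rule, is needed to land in the right fibre. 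The strict commutativity of the upper triangle of the lifting square then rests on the definitional equalities $\psi_{1_{fx}} = 1_{1_{fx}}$ and $(\refl(a))_!(e) = e$, which is exactly why Lemmas~\ref{thm:sym}--\ref{thm:higher} record not only the existence of these coherence cells but also their values at reflexivity. Without this detour, or an equivalent derivation of based elimination, your construction of the diagonal filler does not go through.
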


Let us emphasize that identity types are not involved in the definition of the classes of maps $\mapA$ and $\mapB$.
They are, however, used extensively in the proof that these classes of maps satisfy the axioms for a weak factorisation system. As usual in the proof of the existence of a weak factorisation system, the difficulties are concentrated in  one particular step of the proof.  Lemma~\ref{thm:factone} is the key step in our case, with
the proof of Theorem~\ref{thm:wfs} following from it by standard arguments in the theory of
weak factorisation systems.
To prove Lemma~\ref{thm:factone}, it is convenient to work with the equivalent formulation of the rules for identity contexts given in Table~\ref{tab:varonly}. The equivalence between the sets of rules in Table~\ref{tab:ctxid} and in Table~\ref{tab:varonly} follows by standard properties of substitution. As before, we use the
 notational convention stipulated in Remark~\ref{thm:notation}. Also recall that a dependent
 projection is a context morphism of the form $(\Gamma, \Phi) \rightarrow \Gamma$, defined
 by projecting away the variables in $\Phi$, where $\Gamma$ is a context and $\Phi$ is
 context relative to $\Gamma$.

\begin{lemma}  \label{thm:factone}  Every map $f$ admits a factorisation $f = p i$, where
$i \in \mapA$  and $p$ is a dependent projection.
\end{lemma}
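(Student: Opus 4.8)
The goal is to factor an arbitrary context morphism $f : \Phi \to \Psi$ as $f = pi$ with $i \in \mapA = \rightpitch{\mathcal{D}}$ and $p$ a dependent projection. The idea is to build a "mapping path space" factorisation using identity contexts, exactly mimicking the classical construction $X \xrightarrow{\simeq} P_f \twoheadrightarrow Y$ from homotopy theory, but carried out syntactically. So I would first form the context $\Psi' \defeq (\Phi, y \in \Psi, p \in \Id_\Psi(f(x), y))$ relative to $\Phi$ — here I think of $\Phi$ as supplying the variables $x$, and $y, p$ are fresh — and take $p : \Psi' \to \Psi$ to be the dependent projection forgetting the variables of $\Phi$ and $p$ (sending the displayed object to $y$). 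This is a dependent projection because $\Id_\Psi(f(x), y)$ is a dependent context relative to $(\Phi, y \in \Psi)$ by the formation rule for identity contexts. The map $i : \Phi \to \Psi'$ is the "graph of $f$", given on a point $a$ by $i(a) = (a, f(a), \refl(f(a)))$, using the introduction rule for identity contexts; one checks immediately that $pi = f$.

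**The main obstacle: showing $i \in \mapA$.** Everything reduces to verifying that $i$ has the left lifting property against every display map $g : (\Gamma, z \in C) \to \Gamma$. So suppose given a square with $h : \Phi \to (\Gamma, z \in C)$ on top and $k : \Psi' \to \Gamma$ on the bottom, commuting, i.e. $gh = ki$. I need to produce a diagonal filler $j : \Psi' \to (\Gamma, z \in C)$ with $ji = h$ and $gj = k$. Concretely, $j$ must agree with $k$ after projecting away $z$, so the content of $j$ is a term $c(x, y, p) \in C[k(x,y,p)]$ (where I abusively write the substitution), subject to the constraint that $c(a, f(a), \refl(f(a)))$ equals the $C$-component of $h(a)$. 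This is precisely what the elimination rule for identity contexts delivers: the context $(\Phi, y \in \Psi, p \in \Id(f(x), y))$ is, up to the renaming conventions of Remark~\ref{thm:notation} and the fact that $f(x)$ is a general term rather than a variable, in the image of the identity-context constructor, so I can eliminate on $p \in \Id_\Psi(f(x), y)$ with eliminating context the (pulled-back) $C$ and eliminating family the $C$-component of $h$. The elimination rule gives the term $c(x,y,p)$, and the computation rule gives exactly the equation $ji = h$, while $gj = k$ holds by construction. The subtlety to handle carefully is that standard elimination is phrased for $\Id$ of two *variables*, whereas here one argument is the non-variable term $f(x)$; this is dealt with either by first substituting or, more cleanly, by noting that the equivalent variable-based rules of Table~\ref{tab:varonly} plus the substitution/commutation axioms for $\myJ$ assumed in Section~\ref{sec:identitytypes} let us perform the elimination and then substitute $f(x)$ for the relevant variable. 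This compatibility of elimination with substitution is the one genuinely fiddly point, and it is exactly why those commutation axioms were assumed.

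**Wrapping up.** Having produced the filler and checked the two triangle identities via the computation rule, we conclude $i \pitchfork g$ for every $g \in \mathcal{D}$, hence $i \in \rightpitch{\mathcal{D}} = \mapA$. Combined with $p$ a dependent projection and $pi = f$, this proves the lemma. I would present the construction of $\Psi'$, $i$, and $p$ first; then state $pi = f$ as an immediate check; then devote the bulk of the argument to the lifting property, isolating the elimination-rule step and being explicit about how the eliminating context and eliminating family are extracted from an arbitrary lifting square. The reader should keep in mind the slogan that $i$ is a "syntactic deformation retract inclusion" and $p$ a "fibration", so that the whole statement is the type-theoretic incarnation of the standard mapping-path-space factorisation.
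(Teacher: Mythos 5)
Your setup coincides with the paper's: the mapping-path-space factorisation through $\Id(f) = (x \in \Phi,\, y \in \Psi,\, u \in \Id_\Psi(fx,y))$ with $i_f = (x, fx, 1_{fx})$ and $p_f$ the dependent projection, and the reduction of the lifting problem, via closure of display maps under pullback, to squares whose right-hand edge is $(\Id(f), z \in C(x,y,u)) \rightarrow \Id(f)$. The gap is in the one step you declare to be ``precisely what the elimination rule delivers''. It is not. The obstacle is not that $fx$ is a term rather than a variable --- that really would be substitution bookkeeping, handled by the commutation axioms --- but that the elimination rule~(\ref{equ:eli2}) requires an eliminating family defined over the \emph{entire} diagonal, i.e.\ a judgement of the form $(x \in \Phi,\, y_0 \in \Psi, \ldots)\ d'(x,y_0) \in C(x, y_0, \refl(y_0))$ for \emph{every} $y_0 \in \Psi$, whereas the top edge of the square supplies only $d(x) \in C(x, fx, 1_{fx})$, the value of such a family at the single point $y_0 = fx$. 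Neither of your two escape routes works: if you ``first substitute'' there is no variable left to eliminate on, and if you ``eliminate and then substitute $fx$'' you cannot discharge the premiss of the elimination rule in the first place. What you are implicitly invoking is the \emph{based} path-induction principle (elimination on $u \in \Id(fx,y)$ from data given only at $(fx, \refl(fx))$), which is not among the rules of Table~\ref{tab:ctxid} and must itself be derived.

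The paper's proof is precisely that derivation. One eliminates on a fresh path $v \in \Id(y_0,y_1)$ between two genuine variables of type $\Psi$, taking as eliminating context not $C$ itself but $C(x, y_1, v \circ u)$ over $\Theta(y_0,y_1,v) = (u \in \Id(fx,y_0),\, z \in C(x,y_0,u))$; the diagonal case is then supplied by the variable $z$ already present in $\Theta$ (using the definitional equality $1_y \circ u = u$ from Lemma~\ref{thm:sym}), so no data over all of $\Psi$ is needed. Specialising $v := u$, $u := 1_{fx}$, $z := d(x)$ yields a term of $C(x, y, u \circ 1_{fx})$, which must finally be corrected by transporting along $\psi_u \in \Id(u \circ 1_{fx}, u)$ from Lemma~\ref{thm:higher} via the Leibniz rule. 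The commutativity of the top triangle is then a genuine calculation resting on $\psi_{1_{fx}} = 1_{1_{fx}}$, $(\refl(a))_!(e) = e$ and the computation rule; it is not an immediate instance of the computation rule for the eliminated path. Your proposal omits this entire mechanism, and the direct elimination it substitutes for it would fail.
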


\begin{proof} For  $f : \Phi \rightarrow \Psi $, define $\Id(f) \defeq \big( x \in \Phi,  y \in  \Psi,
u \in \Id_\Psi(fx, y) \big)$.
The required factorisation is defined as follows:
\begin{equation}
\label{equ:fact}
\xymatrix{
\Phi  \ar[r]^(.43){i_f}  &  \Id(f)  \ar[r]^{p_f} & \Psi \, ,}
\end{equation}
where $i_f \defeq (x, fx, 1_{fx})$ and $p_f = (y)$. Apart from the ordering of the variable declarations $x \in \Phi$
and $y \in \Psi$, which is clearly unessential,  $p_f$ is a dependent
projection, as required. Hence, we only need to show that $i_f \in \mapA$. This amounts to showing that it has the left lifting property with respect to all display maps.  This amounts to providing diagonal fillers for every
diagram of the form
\[
\xymatrix{
\Phi \ar[r] \ar[d] &  (v \in \Lambda, z \in D(v)) \ar[d]  \\
\Id(f) \ar[r] &  (v \in \Lambda) }
\]
Since display maps are closed under pullback~\cite[Lemma~6.3.2]{PittsA:catl},  it suffices to show that we can define a diagonal filler for every diagram of the form
\[
\xymatrix{
\Phi  \ar[r] \ar[d] & (\Id(f), z \in C(x,y,u)) \ar[d] \\
\Id(f) \ar@{=}[r]  & \Id(f) }
\]
The right-hand side display map gives us
a dependent type $C(x, y, u)$ relative to $\Id(f) = (x \in \Phi, y \in \Psi, u \in \Id(f))$.
By the commutativity of the diagram,
the top horizontal map gives us
a dependent element  $d(x) \in C(x, fx,  1_{fx})$ relative to $(x \in \Phi)$. We can derive
\[
(x \in \Phi, y_0 \in \Psi, y_1 \in \Psi, v \in \Id(y_0, y_1), \Theta(y_0, y_1, v) ) \ C(x, y_1, v \circ u)  \in \type \, ,
\]
where  $\Theta(y_0, y_1, v) = (u \myin \Id(fx, y_0), z \myin C(x, y_0, u) )$.
By the definitional equality $1_y \circ u = u \in \Id(f(x), y_0)$, proved in Lemma~\ref{thm:sym},  we have
\[
\big(x \in \Phi, y \in \Psi,  \Theta(y,y, 1_y) \big) \ z \in C(x,y, 1_y \circ u) \, ,
\]
By the elimination rule applied to $v \in \Id(y_0, y_1)$, we obtain
\[
(x \in \Phi, y_0 \in \Psi, y_1 \in \Psi, v \in \Id(y_0, y_1), \Theta(y_0, y_1, v) ) \   \myJ(y_0, y_1, v, [\_] z)
\in C(x, y_1, v \circ u)
\]
We then obtain
\[
\big(x \in \Phi, y \in \Psi, u \in \Id(fx, y), z \in  C(x, fx, 1_{fx} )  \big) \ n(x,y,u,z) \in C(x, y, u \circ 1_{fx})
\]
where $n(x,y,u,z) \defeq \myJ(fx, y, u, z)$. We can now substitute $d(x)$ for $z$ and obtain
\[
(x \in \Phi, y \in \Psi, u \in \Id(fx, y)) \ n(x,y, u, d(x)) \myin C(x, y,  u \circ 1_{fx})
\]
Let us now recall that by Lemma~\ref{thm:higher} we have $\psi_{u} \myin \Id(u \circ 1_{fx}, u)$. Hence, we can
apply Lemma~\ref{thm:leibniz} and obtain
\[
(x \in \Phi, y \in \Psi, u \in \Id(fx, y)) \ (\psi_u)_{!}(n(x,y, u, d(x))) \in C(x, y, u)
\]
We claim that $j \defeq (x, y,  u, (\psi_u)_{!}(n(x,y, u, d(x))))$ provides the required filler, fitting in the diagram

\[
\xymatrix{
\Phi  \ar[r] \ar[d] & (\Id(f), z \in C(x,y,u)) \ar[d] \\
\Id(f) \ar@{=}[r]  \ar[ur]^{j} & \Id(f) }
\]
The commutativity of the bottom triangle is evident. The commutativity of the top triangle follows by the
chain of definitional equalities
\begin{eqnarray*}
(\psi_{1_{fx}})_{!}(n(x, fx, 1_{fx}, d(x) )) & = &   n(x, fx, 1_{fx}, d(x) ) \\
   & = & \myJ(fx, fx, 1_{fx}, [\_] d(x) ) \\
   & = & d(x)    \, .
   \end{eqnarray*}
 Here, we used Lemma~\ref{thm:sym}, the definition of $n$, and the elimination rule for identity types.
  The commutativity of the bottom triangle is immediate.
\end{proof}

\begin{proof}[Proof of Theorem~\ref{thm:wfs}]  Since $\mathcal{D} \subseteq \mapB$ and $\mathcal{B}$ is closed under composition, $\mathcal{B}$ contains all dependent projections. The Factorisation Axiom then follows from Lemma~\ref{thm:factone}. For the Weak Orthogonality Axiom,
observe that  $\mapA^\pitchfork = \mapB$ by the very definition of $\mapB$ in Theorem~\ref{thm:wfs}.
So, we only need to show that $\mapA = {}^{\pitchfork} \mapB$.
Since~$\mathcal{D} \subseteq \mapB$,  we have ${}^\pitchfork \mapB \subseteq {}^\pitchfork \mathcal{D}$, and so~${}^\pitchfork \mapB \subseteq \mapA$. Thus, it remains to prove that $\mapA \subseteq {}^\pitchfork \mapB$.  Observe that every map in
$\mapA$ has the left lifting property with respect to every dependent projection. This is because maps in
$\mapA$ have the left lifting property with respect to display maps, and dependent projections are composites of display maps. The required inclusion~$\mapA \subseteq {}^\pitchfork \mapB$ amounts
to showing  that every map in
$\mapA$ has the left lifting property with respect to every map in $\mapB$.
 This follows from the fact that every map in
$\mapA$ has the left lifting property with respect to every dependent projection, which we observed
above,
 and the fact that
every map in~$\mapB$ is a retract of a dependent projection, which follows from Lemma~\ref{thm:factone} and the Retract Argument~\cite[Lemma~1.1.9]{HoveyM:modc}.  \end{proof}

Let us  illustrate what happens when we apply the factorisation of Lemma~\ref{thm:factone} to
the identity map $1_\Phi :  \Phi \rightarrow \Phi$. The factorisation in~(\ref{equ:fact}) becomes
\[
\xymatrix{
\Phi  \ar[r]^(.4){i}  & \Id(\Phi)  \ar[r]^(.56){p} & \Phi  \, ,}
\]
where $\Id(\Phi) \defeq (x \in \Phi, y \in \Phi, u \in \Id(x,y))$, and the
maps~$i$ and $p$ are defined by letting $i \defeq (x, x, 1_x)$ and $p \defeq  (y)$, respectively.
Let us consider a diagram of the form
\begin{equation*}
\xymatrix{
\Phi \ar[rr] \ar[d]_{i}  &  & (\Id(\Phi),  z \in C(x,y,u) ) \ar[d] \\
 \Id(\Phi)  \ar@{=}[rr]    & &   \Id(\Phi) }
\end{equation*}
where $C(x,y,u)$ is a type relative to the context $\Id(\Phi)$.
By the commutativity of the diagram, the top horizontal map gives us a dependent element
$d(x) \in C(x, x,  \refl(x))$ relative to~$(x \in \Phi)$. By the elimination rule, we can deduce that  $\myJ(x,y,u,d) \in C(x,y,u)$. We can  therefore define a filler $j$
\begin{equation*}
\xymatrix{
\Phi \ar[rr] \ar[d]_{i}  & &  (\Id(\Phi),  z \in C(x,y,u)) \ar[d] \\
 \Id(\Phi)  \ar@{=}[rr]   \ar[urr]^{j} &  &  \Id(\Phi) }
\end{equation*}
by letting $j \defeq  (x,y,u, \myJ(x,y,u,d))$.  The commutativity of the top triangle follows from the computation rule, while the commutativity of the bottom triangle is immediate. This is the key idea underpinning the semantics of identity types
in weak factorisation systems introduced in~\cite{AwodeyS:homtmi}.

\begin{remark} The identity type weak factorisation system is not a functorial
weak factorisation system. This would entail having operations mapping
commutative squares
\[
\xymatrix{
\Phi \ar[r]^{f}  \ar[d]_{u}  & \Psi \ar[d]^{v} \\
\Phi' \ar[r]_{g}  & \Psi' }
\]
into commutative diagrams
\begin{equation}
\label{equ:funcnot}
\vcenter{\hbox{\xymatrix{
\Phi \ar[rr]^{f}  \ar[d]_{i_u}  & &  \Psi \ar[d]^{i_v}  \\
\Id(u) \ar[rr]^{\Id_{f,g}} \ar[d]_{p_u}  & &  \Id(v) \ar[d]^{p_v}  \\
\Phi' \ar[rr]_{g} & &  \Psi' }}}
\end{equation}
subject to the functoriality conditions 
\begin{equation}
\label{equ:functoriality}
\Id_{1_\Phi, 1_{\Phi'}} = 1_{\Id(u)} \qquad
\Id_{h, k} \circ \Id_{f, g} = \Id_{hf, kg}
\end{equation}
We can use the identity
elimination rules to define maps \mbox{$\Id_{f,g} : \Id(u) \rightarrow \Id(v)$}
making the diagram in~(\ref{equ:funcnot}) commute and satisfying the 
first functoriality condition in~(\ref{equ:functoriality}). However, these maps need not satisfy the second functoriality condition in~(\ref{equ:functoriality}) strictly,
but only up to propositional equality. To see why this is, let us consider the
following situation:
\[
\xymatrix{
\Phi \ar[r]^f \ar@{=}[d] & \Psi \ar[r]^g \ar@{=}[d]  & \Xi \ar@{=}[d] \\
\Phi \ar[r]_f & \Psi \ar[r]_g  & \Xi\text. }
\]
The maps $\Id_{f,f} : \Id(\Phi) \rightarrow \Id(\Psi)$ and $\Id_{g,g} :
\Id(\Psi) \rightarrow \Id(\Xi)$ are defined as in equation~(\ref{equ:fp}) of
Proposition~\ref{thm:functor}. Their composite will not in general be equal
to the map $\Id_{gf, gf} : \Id(\Phi) \rightarrow \Id(\Xi)$, but only pointwise
propositionally equal.
\end{remark}

\section{Characterisation and applications}

We provide an explicit characterisation of the maps in the classes $\mapA$ and $\mapB$ of the
identity type weak factorisation system established in Theorem~\ref{thm:wfs}.
For this, we introduce some terminology, which is inspired by concepts of
2-dimensional category theory~\cite{BlackwellR:twodmt,LackS:homta2,StreetR:fibys}.
We define a context morphism $f : \Phi \rightarrow \Psi$ to
be a \myemph{type-theoretic injective equivalence} if we can derive a jugdement
\[
( y \in \Psi ) \  s(y) \in \Phi
\]
such that we can derive also judgements of the form
\begin{gather}
(x \in \Phi ) \  x =  s(f(x))   \in \Phi \, , \label{equ:injeqv1} \\
(y \in \Psi ) \  \varepsilon_y \in \Id_\Psi(f(s(y)), y) \label{equ:injeqv2}  \, , \\
( x \in \Phi ) \  \varepsilon_{f(x)} = 1_{f(x)} \in \Id_\Psi(f(x), f(x))     \label{equ:injeqv3} \, .
\end{gather}
We say  that $f: \Phi \rightarrow \Psi$ is a \myemph{type-theoretic normal isofibration} if we can derive a judgement
\[
\big( x \in \Phi, y \in \Psi, u \in \Id(f(x),y) \big)  \ j(x,y,u) \in \Phi
\]
such that we can also derive judgements of the following form
\begin{gather}
\big( x \in \Phi, y \in \Psi, u \in \Id(f(x),y) \big) \  f j(x,y,u) = y \in \Psi \, , \label{equ:fib1} \\
( x \in \Phi ) \   j(x, fx, 1_{fx} ) = x \in \Phi \, \label{equ:fib2} .
\end{gather}
Although the identity type weak factorisation system does not seem to be functorial, we can follow the argument used to characterise  the maps of a functorial weak factorisation system in~\cite[\S2.4]{RosickyJ:laxfa} to establish Lemma~\ref{thm:char}.

\begin{lemma} \label{thm:char} Let $f : \Phi \rightarrow \Psi$ be a context morphism.
\begin{enumerate}[(i)]
\item   \label{itm:inj}  $f \in \mapA$ if and only if  $f$ is a type-theoretic injective equivalence.
\item  $f \in \mapB$ if and only if $f$ is a type-theoretic normal isofibration.  \label{itm:fib}
\end{enumerate}
\end{lemma}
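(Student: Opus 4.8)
The plan is to prove each of the two equivalences by establishing both implications, in each case using the explicit factorisation $f = p_f i_f$ from Lemma~\ref{thm:factone} together with standard weak factorisation system arguments. I would structure the argument around the slogan: a map lies in $\mapA$ (resp.\ $\mapB$) exactly when it is a retract, in the appropriate sense, of the canonical $i_f$ (resp.\ $p_f$), and then unwind what such a retraction says type-theoretically.

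For part~(\ref{itm:inj}), the forward direction $f \in \mapA \Rightarrow f$ is a type-theoretic injective equivalence: since $f \in \mapA = {}^\pitchfork\mathcal{D}$, it has the left lifting property with respect to all dependent projections (as observed in the proof of Theorem~\ref{thm:wfs}), hence with respect to $p_f : \Id(f) \to \Psi$. Lifting in the square with top edge $i_f : \Phi \to \Id(f)$ and bottom edge $1_\Psi$ yields a retraction $r : \Id(f) \to \Phi$ with $r i_f = 1_\Phi$ and $p_f r = 1_\Psi$. Reading $r$ componentwise as a triple, its first component gives the required $s : \Psi \to \Phi$, while the equations $r i_f = 1_\Phi$ and $p_f r = 1_\Psi$ unwind precisely into~(\ref{equ:injeqv1}), the data $\varepsilon_y$ of~(\ref{equ:injeqv2}) (the third component of $r$), and the normalisation~(\ref{equ:injeqv3}); one must be a little careful that the component of $r$ landing in $\Id_\Psi$ is genuinely indexed as stated, but this is bookkeeping. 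Conversely, given a type-theoretic injective equivalence, I would verify directly that $f$ has the left lifting property with respect to an arbitrary display map $(\Gamma, z \in D) \to \Gamma$: given a square, the section $s$ lets us transport the chosen lift over $\Phi$ along the path $x = s(f(x))$ of~(\ref{equ:injeqv1}) and the path $\varepsilon$ of~(\ref{equ:injeqv2}) (using the Leibniz rule, Lemma~\ref{thm:leibniz}), and~(\ref{equ:injeqv3}) is exactly the normalisation needed to make the resulting diagonal commute with the top edge strictly rather than merely propositionally.

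For part~(\ref{itm:fib}), the forward direction $f \in \mapB \Rightarrow f$ is a type-theoretic normal isofibration: by Lemma~\ref{thm:factone} and the Retract Argument, every $\mapB$-map is a retract of a dependent projection; more directly, $f \in \mapB = \mapA^\pitchfork$ means $f$ has the right lifting property with respect to $i_f \in \mapA$, and lifting in the square with top edge $i_f$, bottom edge $p_f$, right edge $f$ gives $k : \Id(f) \to \Phi$ with $k i_f = 1_\Phi$ and $f k = p_f$. Componentwise, $k$ supplies the operation $j(x,y,u) \in \Phi$, with $f k = p_f$ unwinding to~(\ref{equ:fib1}) and $k i_f = 1_\Phi$ to~(\ref{equ:fib2}). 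Conversely, given such a $j$, to show $f \in \mapB = \mapA^\pitchfork$ it suffices to show $f$ has the right lifting property against every $\mapA$-map; using that $\mapA$-maps lift against dependent projections, one reduces (as in the proofs above) to lifting against $i_g$ for the relevant $g$, and there the operation $j$ together with~(\ref{equ:fib1})--(\ref{equ:fib2}) produces the diagonal filler, the normalisation~(\ref{equ:fib2}) again being what forces strict commutativity of the upper triangle.

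The main obstacle I expect is not any single deep step but the passage from "weak" (up-to-propositional-equality) lifting data to the strict definitional equations demanded in~(\ref{equ:injeqv1})--(\ref{equ:injeqv3}) and~(\ref{equ:fib1})--(\ref{equ:fib2}): the naive diagonal fillers extracted from the elimination rule commute only up to a homotopy, and one must use the computation rule together with the normalisation clauses~(\ref{equ:injeqv3}) and~(\ref{equ:fib2}) to rectify them, exactly as in the normalisation trick already used in the proof of Lemma~\ref{thm:factone} (the $(\psi_u)_!$ correction there). This is precisely the point at which the argument of~\cite[\S2.4]{RosickyJ:laxfa} for functorial weak factorisation systems is being adapted, and the care needed is in checking that adaptation goes through despite the identity type weak factorisation system not being functorial on the nose. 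A secondary bookkeeping burden is keeping track of context orderings (e.g.\ the harmless transposition of $x \in \Phi$ and $y \in \Psi$ already flagged in Lemma~\ref{thm:factone}) so that the componentwise readings of the retractions line up with the stated judgement forms; I would handle this silently, as the paper does elsewhere.
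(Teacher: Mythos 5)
Your proposal takes essentially the same route as the paper: membership of $\mapA$ (resp.\ $\mapB$) is characterised by the existence of a diagonal filler for the canonical square built from the factorisation $f = p_f\, i_f$ of Lemma~\ref{thm:factone}, one implication comes from retract-closure of the lifting-defined classes, and the other from reading the filler off componentwise as the judgements (\ref{equ:injeqv1})--(\ref{equ:injeqv3}), resp.\ (\ref{equ:fib1})--(\ref{equ:fib2}). The only genuine divergence is in the converse of (i): the paper notes that the data $(s,\varepsilon)$ is literally a filler for the square in~(\ref{equ:fill}), hence exhibits $f$ as a retract of $i_f \in \mapA$, and concludes by retract-closure; you instead verify the left lifting property against an arbitrary display map directly, transporting the given lift along $\varepsilon$ by the Leibniz rule. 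Both are correct; yours is more self-contained but repeats the transport argument of Lemma~\ref{thm:factone}, and it also shows that (contrary to your closing paragraph) no weak-to-strict rectification issue actually arises in this lemma --- the retract data commutes strictly by definition, and your direct computation closes strictly by~(\ref{equ:injeqv1}), (\ref{equ:injeqv3}) and the computation clause of Lemma~\ref{thm:leibniz}. Two slips to repair in a final version: the filler of the square with left edge $f$ and right edge $p_f$ is a map $\Psi \rightarrow \Id(f)$, not a map $\Id(f) \rightarrow \Phi$ (for the latter the equation $p_f r = 1_\Psi$ is not even well-typed), although the triple you subsequently read off is the right one; and in the converse of (ii) the reduction is not to ``lifting against $i_g$'' but rather to lifting the given $\mapA$-map against the dependent projection $p_f$ and post-composing with the retraction $\Id(f) \rightarrow \Phi$ supplied by $j$ --- equivalently, $f$ is a retract of $p_f \in \mapB$ and $\mapB$ is retract-closed.
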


\begin{proof} Recall that by Lemma~\ref{thm:factone} every map $f : \Phi \rightarrow \Psi $ admits a factorisation
\begin{equation*}
\xymatrix{
\Phi  \ar[r]^(.43){i_f}  &  \Id(f)  \ar[r]^{p_f} & \Psi \, ,}
\end{equation*}
where $i_f \in \mathcal{A}$ and $p_f \in \mathcal{B}$. Let us prove $(\ref{itm:inj})$. Define  $\mapA'$ to be the class of maps $f : \Phi \rightarrow \Psi$ such that the commutative diagram
\begin{equation}
\label{equ:fill}
\vcenter{\hbox{\xymatrix{
\Phi \ar[r]^(.45){i_f} \ar[d]_f  & \Id(f) \ar[d]^{p_f}  \\
\Psi \ar@{=}[r] & \Psi }}}
\end{equation}
has a diagonal filler. We claim that $\mapA = \mapA'$. To show $\mapA \subseteq \mapA'$, let $f : \Phi \rightarrow \Psi$ be in $\mapA$. The diagram in~(\ref{equ:fill}) has a diagonal filler since $f \in \mapA$ and $p_f \in \mapB$. To show $\mapA' \subseteq \mapA$, let $f : \Phi \rightarrow \Psi$ be in $\mapA'$, and assume to have a diagonal filler $j : \Psi \rightarrow \Id(f)$ for the diagram in~(\ref{equ:fill}). We can then exhibit $f$ as a retract of $i_f$ by the diagram
\[
\xymatrix{
\Phi \ar@{=}[r] \ar[d]_f & \Phi \ar@{=}[r] \ar[d]^{i_f} & \Phi \ar[d]^{f} \\
\Psi \ar[r]_(.4)j & \Id(f) \ar[r]_(.6){p_f} & \Psi }
\]
We have that $i_f \in \mapA$. Since the class $\mapA$, being defined by a weak orthogonality condition,  is closed under retracts, we have $f \in \mapA$. To conclude the proof, it is sufficient to
observe that $f$ is an injective equivalence if and only if $f \in \mapA'$. This involves unfolding the definitions of the context $\Id(f)$ and of  the morphism $i_f : \Phi \rightarrow \Id(f)$. For the proof of $(\ref{itm:fib})$, let  $\mapB'$ be the class of maps $f : \Phi \rightarrow \Psi$ such that the commutative diagram
\begin{equation*}
\xymatrix{
\Phi \ar[d]_(.45){i_f} \ar@{=}[r]  & \Phi \ar[d]^{f}  \\
\Id(f)  \ar[r]_(.53){p_f} & \Psi }
\end{equation*}
has a diagonal filler. The rest of argument follows along the lines of the one used to establish $(\ref{itm:inj})$ and hence
we omit it.
\end{proof}

We give two applications of Lemma~\ref{thm:char}. To motivate our first application,
let us recall that, given a weak factorisation system $(\mapA, \mapB)$ on a category $\catE$ with
finite products, it is possible to define a homotopy relation between maps of $\catE$ by
thinking of $\mapA$ and $\mapB$ as if they were the classes of acyclic
cofibrations and of fibrations of a Quillen model structure on $\catE$. For
convenience, let us assume that the weak factorisation system comes together
with an explicitly chosen factorisation of diagonal maps $\Delta_X : X
\rightarrow X \times X$ as an $\mapA$-map $i_X :  X \rightarrow X^I$ followed
by a $\mapB$-map $p_X : X^I \rightarrow X \times X$. We may then define two
maps $f : X \rightarrow Y$ and $g : X \rightarrow Y$ to be \myemph{homotopic}
if there exists a map $h : X \rightarrow Y^I$ such that the diagram
\[
\xymatrix{
  & X \ar[d]^h \ar@/^1pc/[ddr]^g \ar@/_1pc/[ddl]_f &  \\
  & \; Y^I  \ar[d]^{p_Y}  &  \\
 Y & Y \times Y \ar[l]^(.55){\pi_1}  \ar[r]_(.6){\pi_2} & Y}
  \]
 commutes. In the case of the identity type weak factorisation system, an
 explicit choice of factorisations of diagonal maps is given by
\[
\xymatrix{
\Phi  \ar[r]^(.4){i}  & \Id(\Phi)  \ar[r]^-{p} & (x \in \Phi, y \in \Phi)  \, }
\]
where the maps~$i$ and $p$ are given by $i \defeq (x, x, 1_x)$ and $p
\defeq  (x, y)$, respectively. It follows that two maps $f, g \colon \Phi \to
\Psi$ in the classifying category $\classT$ are homotopic if and only if we can derive a
judgement $( x \in \Phi) \  p(x) \in \Id(f(x), g(x))$ expressing their
pointwise propositional equality.
 The homotopy relation induced by a weak factorisation system $(\mapA, \mapB)$
 can always be shown to be reflexive and symmetric. However, without further assumptions on the weak factorisation
 system
we may not be able to show that it is transitive. One assumption that ensures
transitivity is that pullbacks of $\mapA$-maps in along $\mapB$-maps are again
$\mapA$-maps. In our example, the classifying category $\classT$ does not have
arbitrary pullbacks, but it does admit pullbacks along dependent
projections~\cite[\S6]{PittsA:catl}, which are $\mapB$-maps. The 
closure property of $\mapA$-maps stated in Proposition~\ref{thm:closure} below,
however,  suffices to show that the homotopy 
relation for the identity type weak factorisation system is transitive,
as an easy calculation shows.

\begin{proposition} \label{thm:closure}
Pullbacks of $\mapA$-maps along dependent projections are $\mapA$-maps.
\end{proposition}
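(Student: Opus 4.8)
The plan is to deduce the statement from the characterisation of $\mapA$-maps in Lemma~\ref{thm:char}(\ref{itm:inj}): since $\mapA$ is exactly the class of type-theoretic injective equivalences, it suffices to show that these are stable under pullback along dependent projections. As every dependent projection is a composite of display maps and the pasting lemma for pullbacks applies, I would reduce immediately to the case of a single display map $q \colon (\Psi, z \in D) \to \Psi$; this confines the identity-context bookkeeping to the length-one situation, for which the explicit description of $\Id_{(\Psi, D)}$ from Section~\ref{sec:identitytypes} is available. For such a $q$ and a context morphism $f \colon \Phi \to \Psi$, the pullback of $f$ along $q$ is the morphism $f' \colon (\Phi, z \in D[f]) \to (\Psi, z \in D)$ with $f'(x,z) = (f(x), z)$, where $D[f]$ abbreviates $D(f(x))$.

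Given the injective-equivalence data $(y \in \Psi)\ s(y) \in \Phi$ and $\varepsilon$ for $f$ as in (\ref{equ:injeqv1})--(\ref{equ:injeqv3}), I would construct the corresponding data for $f'$. For the section, put $s'(y,z) \defeq \big(s(y),\ (\varepsilon_y^{-1})_{!}(z)\big)$, using the Leibniz rule of Lemma~\ref{thm:leibniz} to transport $z \in D(y)$ along the reversed path $\varepsilon_y^{-1} \in \Id_\Psi(y, f(s(y)))$ provided by Lemma~\ref{thm:sym}. For the homotopy, recall from Section~\ref{sec:identitytypes} that an element of $\Id_{(\Psi, D)}\big((f(s(y)),\, (\varepsilon_y^{-1})_{!}(z)),\, (y,z)\big)$ amounts to a path $v \in \Id_\Psi(f(s(y)), y)$ together with a path in $D(y)$ from $v_{!}\big((\varepsilon_y^{-1})_{!}(z)\big)$ to $z$; taking $v \defeq \varepsilon_y$, the required second component is obtained by composing the canonical identification of $(\varepsilon_y)_{!}\big((\varepsilon_y^{-1})_{!}(z)\big)$ with $(\varepsilon_y \circ \varepsilon_y^{-1})_{!}(z)$ — a pseudofunctoriality property of transport that follows routinely from the elimination rule — with the image under transport of the homotopy $\tau_{\varepsilon_y} \in \Id(\varepsilon_y \circ \varepsilon_y^{-1}, 1_y)$ of Lemma~\ref{thm:higher}, invoking the definitional equality $(1_y)_{!}(z) = z$ to match the endpoints. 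Write $\varepsilon'_{(y,z)}$ for the resulting element.

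What then remains is to check (\ref{equ:injeqv1})--(\ref{equ:injeqv3}) for $f'$. Condition (\ref{equ:injeqv2}) holds by construction, and (\ref{equ:injeqv1}) follows from the computation $s'(f(x),z) = \big(s(f(x)),\, (\varepsilon_{f(x)}^{-1})_{!}(z)\big) = \big(x,\, (1_{f(x)})_{!}(z)\big) = (x,z)$, using (\ref{equ:injeqv1}) and (\ref{equ:injeqv3}) for $f$, the equality $(1_a)^{-1} = 1_a$ from Lemma~\ref{thm:sym}, and the computation clause of Lemma~\ref{thm:leibniz}. The step I expect to be the main obstacle is (\ref{equ:injeqv3}): substituting $y \mapsto f(x)$, one has $\varepsilon_{f(x)} = 1_{f(x)}$ by (\ref{equ:injeqv3}) for $f$ and $\tau_{1_{f(x)}} = 1_{1_{f(x)}}$ by Lemma~\ref{thm:higher}, and one must verify that every ingredient of $\varepsilon'_{(f(x),z)}$ then degenerates to a reflexivity term, so that $\varepsilon'_{(f(x),z)} = 1_{(f(x),z)}$ holds as a \emph{definitional} equality and not merely up to propositional equality. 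The feature that makes this succeed is that the computation rule (\ref{equ:comp}) for identity types — and hence the computation rule for identity contexts — applies with an arbitrary term, not only a variable, in the reflexivity position, so that each application of the elimination rule appearing in the construction of $\varepsilon'$ reduces strictly as soon as its path argument has become $1_{f(x)} = \refl(f(x))$. Finally, that all of these constructions commute with the substitution $y \mapsto f(x)$, and are stable under pullback generally, is guaranteed by the commutation axioms for identity types recorded in Section~\ref{sec:identitytypes}, so no separate coherence verification is needed.
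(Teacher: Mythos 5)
Your proof follows the paper's own argument in all essentials: both reduce via Lemma~\ref{thm:char} to showing that type-theoretic injective equivalences are stable under the relevant pullbacks, both take the section to be $s'(y,z) = \big(s(y),\ (\varepsilon_y^{-1})_!(z)\big)$ via the Leibniz rule, and both take $\varepsilon_y$ as the first component of the required homotopy. There are two minor divergences. First, you reduce to a single display map by pasting pullbacks, whereas the paper pulls back along a general dependent projection $(y \in \Psi, z \in \Omega(y)) \to (y \in \Psi)$ in one go; your reduction is legitimate and confines the identity-context unfolding to the length-one case. Second, and more substantively, for the second component of the homotopy the paper proves a single auxiliary rule producing $\gamma_p(e) \in \Id\big(p_!((p^{-1})_!(e)), e\big)$ by one elimination on $p$, with $\gamma_{1_a}(e) = 1_e$ holding definitionally by a single application of the computation rule; you instead assemble the same homotopy from a pseudofunctoriality term for transport, a whiskering of $\tau_{\varepsilon_y}$, and a composition of homotopies. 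Your route does go through, and you correctly identify both the obstacle for~(\ref{equ:injeqv3}) and the reason it is surmountable (the computation rule applies to any term definitionally equal to a reflexivity term, not only to a literal $\refl(x)$), but it multiplies the strict-reduction checks: to collapse your composite to $1_{(f(x),z)}$ you must also invoke $1_b \circ p = p$ and $(1_a)^{-1} = 1_a$ from Lemma~\ref{thm:sym} and $(1_a)_!(e) = e$ from Lemma~\ref{thm:leibniz} at each stage. The paper's one-shot $\gamma$ is the cleaner packaging of exactly this verification.
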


\begin{proof} By Lemma~\ref{thm:char}, the claim follows once we show that, for a pullback diagram of form
\[
\xymatrix{
(x \in \Phi, z \in \Omega(f(x))) \ar[r] \ar[d]_{g} &   (x \in \Phi) \ar[d]^{f} \\
(y \in \Psi, z \in \Omega(y)) \ar[r]  & (y \in \Psi) }
\]
where $g = (f(x), z)$, if $f$ is an injective equivalence, then so is $g$. Since $f$ is an injective equivalence, we may assume to have
\[
(y \in \Psi) \ s(y) \in \Phi
\]
and the judgements in~(\ref{equ:injeqv1}), (\ref{equ:injeqv2}), (\ref{equ:injeqv3}).
Our first step in showing that $g$ is an injective equivalence will be to construct a judgement
\begin{equation*}
    (y \in \Psi, z \in \Omega(y))\ t(y, z) \in (x \in \Phi,\, z \in \Omega(f(x))
\end{equation*}
satisfying $    (x \in \Phi,\, z \in  \Omega(fx))\ tg(x, z) = (x, z) \in (x \in \Phi,   z \in \Omega(x))$. Now, to give
$t$ is equivalently to give judgements
\begin{gather*}
    (y \in \Psi,\, z \in \Omega(y)) \ t_1(y, z)  \in \Phi \, , \\
    (y \in \Psi,\, z \in \Omega(y)) \ t_2(y, z) \in \Omega(f(t_1(y, z)))\text.
\end{gather*}
So we define $t_1(y, z) \defeq s(y)$. Now must give an element $t_2(y, z) \in
\Omega(fsy)$. We obtain this by substituting $z \in \Omega(y)$ along
$\varepsilon_{y}^{-1} \in \Id(y,fsy)$ using the Leibniz rule:
\begin{equation*}
 ( y \in \Psi,\, z \in \Omega(y)) \  \, t_2(y, z) \defeq (\varepsilon_{y}^{-1})_!(z) \in \Omega(f(t_1(y, z))) \, .
\end{equation*}
Observe that we have
\begin{eqnarray*}
tg(x, z) &  = & t(fx, z) \\
              &  =  & (t_1(fx, z), t_2(fx, z))  \\
               & =  & (sfx, (\varepsilon_{fx}^{-1})_!(z)) \\
               & = & (x, (1_{fx})_!(z)) \\
               & = & (x, z)
\end{eqnarray*}
as required. We now come to the second step in the proof, which is to construct a judgement
\[
    (y \in \Psi,\, z \in \Omega(y)) \ \delta_{(y, z)} \in  \Id(gt(y, z), (y, z))
\]
satisfying $(x \in  \Phi,\, z \in \Omega (fx))\ \delta_{(fx, z)} = 1_{(fx, z)} \in \Id( (fx, z), (fx, z))$. Now, to give $\delta$ is the same as to give a judgement
\[
(y \in \Psi, \, z \in \Omega(y))\ \delta(y, z)  \in \Id\big(   (fsy,\, (\varepsilon_{y}^{-1})_!(z))  , (y, z) \big) \, .
\]
By the description of identity context in Section~\ref{sec:identitytypes}, to give this is equally well to give
a pair of judgements
\begin{gather*}
    (y \in \Psi,\, z \in \Omega(y)) \ \delta_1(y, z) \in \Id(fsy, y) \\
    (y \in \Psi,\, z \in  \Omega(y)) \ \delta_2(y, z) \in \Id \big( \delta_1(y, z)_!(\varepsilon_{y}^{-1})_!(z), z \big)
\end{gather*}
So we define $\delta_1(y, z) \defeq \varepsilon(y)$. We must now give an element
\[
\delta_2(y, z) \in \Id( (\varepsilon_{y})_{!}(\varepsilon_{y}^{-1})_!(z), z) \, .
\]
For this, let us show that we can derive a rule of the form

\[
\begin{prooftree}
p \in \Id(a,b) \qquad
(x \in \Phi) \ \Omega(x) \in \contype \qquad
e \in \Omega(b)
\justifies
\gamma_p(e) \in \Id( (p)_{!} (p^{-1})_{!}(e), e)
\end{prooftree}
\]
such that

\[
\begin{prooftree}
a \in \Phi  \quad
e \in \Omega(a)
\justifies
\gamma_{1_a}(e) = 1_e \in \Id(e,e)
\end{prooftree}
\medskip
\]
 By  elimination on $p \in \Id(a,b)$, we define
\[
\gamma_p(e) \defeq \myJ(a,b,p, [x] 1_{e} ) \in \Id( (p)_{!} (p^{-1})_{!}(e), e)
\]
Indeed, Lemma~\ref{thm:trans} implies that for $x \in \Phi$ and $z \in \Omega(x)$, we have
\begin{eqnarray*}
(1_x)_!   (1_{x}^{-1})_{!}(z)  & = & (1_{x}^{-1})_!(z) \\
 & = & (1_x)_!(z) \\
 &= & z \, .
 \end{eqnarray*}
 We can then define $\delta_2(y, z) \defeq \gamma_{\varepsilon_{y}}(z)$. This specifies $\delta$, and we now calculate
\begin{eqnarray*}
\delta{(fx, z)}  & = & (\delta_1(fx, z), \delta_2(fx, z)) \\
                     & =  &  (\varepsilon_{fx}, \gamma_{\varepsilon_{fx}}(z)) \\
                      & =  & (1_{fx}, \gamma_{1_{fx}}(z)) \\
                      &  = & (1_{fx}, 1_z) \\
                      & = & 1_{(fx, z)}
\end{eqnarray*}
as required.
\end{proof}

For our second application of Lemma~\ref{thm:char}, let us recall that
the category $\Gpd$ of small groupoids and functors admits a Quillen model structure $(\mathcal{W}, \mathcal{C},
\mathcal{F})$, in which the class of weak equivalences $\mathcal{W}$ consists of the categorical equivalences,
the class of fibrations $\mathcal{F}$ consists of the
Grothendieck fibrations, and the class of cofibrations $\mathcal{C}$ consists of the functors that are injective
on objects~\cite{AndersonD:fibgr,JoyalA:strscs}. As a consequence of this, both 
$(\mathcal{W} \cap \mathcal{C}, \mathcal{F})$ and $(\mathcal{C}, \mathcal{W}
\cap \mathcal{F})$ are weak factorisation systems on $\Gpd$. We shall be interested in relating the weak factorisation system $(\mathcal{W} \cap \mathcal{C}, \mathcal{F})$ on~$\Gpd$ with the identity type weak factorisation system on $\classT$.
Let us recall that a functor $f : A \rightarrow B$ between
groupoids is a Grothendieck fibration if and only if for every $\beta : f(a) \rightarrow b$ in $B$ there exists
$\alpha : a \rightarrow a'$ in $A$ such that $f(a') = b$ and~$f(\alpha) = \beta$. The required factorisation of a
functor $f : A \rightarrow B$ as an equivalence injective on objects followed by a Grothendieck fibration can
be obtained using the familiar \myemph{mapping space} construction,
\[
\xymatrix{
A \ar[r]^(.37){i_f} & \Path(f) \ar[r]^(.63){p_f} & B \, ,}
\]
where $\Path(f)$ is the groupoid whose objects consist of triples $(a,b,\beta)$, where
$a \in A$, $b \in B$, and $\beta : f(a) \rightarrow b$ in $B$.

\begin{theorem} Let  $f : \Phi \rightarrow \Psi$ be a context morphism.
\begin{enumerate}[(i)]
\item If $f \in \mapA$, then $\mathcal{F}(\Phi) \rightarrow \mathcal{F}(\Psi)$ is an equivalence injective on objects.
\item If $f \in \mapB$, then $\mathcal{F}(\Phi) \rightarrow \mathcal{F}(\Psi)$ is a Grothendieck fibration.
\end{enumerate}
\end{theorem}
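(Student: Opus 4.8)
The plan is to derive both statements directly from the characterisation of $\mapA$-maps and $\mapB$-maps in Lemma~\ref{thm:char}, the only real work being two applications of the identity type elimination rule. Throughout I use that the functor $\mathcal{F}$ of Proposition~\ref{thm:functor} acts by $a \mapsto f(a)$ on objects and by $[p] \mapsto [f(p)]$ on morphisms, and that $f(1_a) = 1_{f(a)}$ definitionally by~(\ref{equ:fid}).

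For part~(i), assume $f \in \mapA$, so by Lemma~\ref{thm:char}(\ref{itm:inj}) we are given a context morphism $s \colon \Psi \to \Phi$ and the judgements~(\ref{equ:injeqv1})--(\ref{equ:injeqv3}). Equation~(\ref{equ:injeqv1}) says exactly that $s \circ f = 1_\Phi$ in $\classT$: instantiating it at global elements shows $\mathcal{F}(f)$ is injective on objects, and by functoriality $\mathcal{F}(s) \circ \mathcal{F}(f) = \mathcal{F}(1_\Phi) = 1_{\mathcal{F}(\Phi)}$. It then remains to produce a natural isomorphism $\mathcal{F}(f) \circ \mathcal{F}(s) = \mathcal{F}(f \circ s) \cong 1_{\mathcal{F}(\Psi)}$, which exhibits $\mathcal{F}(s)$ as a quasi-inverse to $\mathcal{F}(f)$ and hence $\mathcal{F}(f)$ as an equivalence. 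Its components are the isomorphisms $[\varepsilon_b] \colon f(s(b)) \to b$ furnished by~(\ref{equ:injeqv2}); naturality amounts to the identity $[\beta] \circ [\varepsilon_b] = [\varepsilon_{b'}] \circ [(fs)(\beta)]$ for $\beta \in \Id_\Psi(b,b')$, which I would prove by the elimination rule on $\beta$, the case $\beta = 1_b$ following from the unit laws of Lemmas~\ref{thm:sym} and~\ref{thm:higher} together with the definitional equality $(fs)(1_b) = 1_{f(s(b))}$.

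For part~(ii), assume $f \in \mapB$, so by Lemma~\ref{thm:char}(\ref{itm:fib}) we are given a judgement $(x \in \Phi, y \in \Psi, u \in \Id(f(x),y))\ j(x,y,u) \in \Phi$ satisfying~(\ref{equ:fib1}) and~(\ref{equ:fib2}). To verify the Grothendieck fibration condition, let $a$ be a global element of $\Phi$ and $[\beta] \colon \mathcal{F}(f)(a) \to b$ a morphism of $\mathcal{F}(\Psi)$, so $\beta \in \Id_\Psi(f(a),b)$. Set $a' \defeq j(a,b,\beta)$; then $f(a') = b$ by~(\ref{equ:fib1}). I would then construct, by the elimination rule on $u$, a path $\alpha(x,y,u) \in \Id_\Phi(x, j(x,y,u))$ whose reflexivity case is $\alpha(x, f(x), 1_{f(x)}) \defeq 1_x$; this typechecks since~(\ref{equ:fib2}) gives $j(x, f(x), 1_{f(x)}) = x$. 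Specialising produces $\alpha \defeq \alpha(a,b,\beta)$, hence a morphism $[\alpha] \colon a \to a'$ of $\mathcal{F}(\Phi)$, and it remains to check $\mathcal{F}(f)[\alpha] = [f(\alpha)] = [\beta]$: by~(\ref{equ:fib1}) the path $f(\alpha(x,y,u))$ lives in $\Id_\Psi(f(x),y)$, and a homotopy from it to $u$ is produced by a second elimination on $u$, whose reflexivity case reduces, via the computation rule and~(\ref{equ:fid}), to $f(1_x) = 1_{f(x)}$.

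The constructions are routine once the eliminating contexts are fixed; the main obstacle is bookkeeping, since each use of the elimination rule yields a term whose type matches the required one only up to definitional equality, so one must invoke precisely the equations from Lemmas~\ref{thm:sym} and~\ref{thm:higher} and~(\ref{equ:fid}), (\ref{equ:fib1}), (\ref{equ:fib2}) to see that the reflexivity cases are well typed and that the fillers have the asserted properties. The naturality square in part~(i) is the most substantial of these verifications.
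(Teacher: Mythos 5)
Your part~(i) is essentially the paper's proof: the same quasi-inverse $\mathcal{F}(s)$, the same observation that~(\ref{equ:injeqv1}) gives $s\circ f = 1_\Phi$ strictly (hence injectivity on objects and $\mathcal{F}(s)\circ\mathcal{F}(f)=1$), and the same naturality check for $[\varepsilon_b]$ by elimination on a path in $\Psi$ — which is a legitimate direct use of the elimination rule, since there both endpoints can be taken to be free variables of $\Psi$.

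Part~(ii) departs from the paper, and this is where you underestimate the work. The paper does \emph{not} build the connecting path by elimination; it instead observes that the square formed by $i_f$ and the display map of $\Omega(x,y,u)=\Id_{\Id(f)}\bigl((x,fx,1_{fx}),(x,y,u)\bigr)$ has a diagonal filler because $i_f\in\mapA$, obtains a term $m(x,y,u)$ in that identity context, and then transports the resulting morphism of $\mathcal{F}(\Id(f))$ along the functor $\mathcal{F}(j)$, using $f\circ j=p_f$ to see that it lies over $\beta$. Your alternative — ``construct, by the elimination rule on $u$, a path $\alpha(x,y,u)\in\Id_\Phi(x,j(x,y,u))$ with reflexivity case $1_x$'' — is not an instance of the elimination rule as stated in Table~\ref{tab:ctxid}: there $u$ ranges over $\Id(f(x),y)$ whose left endpoint is the \emph{term} $f(x)$, not a free variable, so this is based path induction, which rule~(\ref{equ:eli2}) does not directly provide. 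The issue is not the ``bookkeeping up to definitional equality'' you anticipate: the known workaround (generalise $f(x)$ to a fresh variable $y_0$, carry $u\in\Id(fx,y_0)$ and a seed in the auxiliary context $\Theta$, then specialise) yields a term of type $\Id_\Phi(x,j(x,y,u\circ 1_{fx}))$, which agrees with the target only up to the \emph{propositional} equality $\psi_u\in\Id(u\circ 1_{fx},u)$ and must be corrected by a Leibniz transport — exactly the manoeuvre that occupies the proof of Lemma~\ref{thm:factone}. The same caveat applies to your second elimination on $u$ establishing $[f(\alpha)]=[\beta]$. So your argument can be completed, either by redoing the Lemma~\ref{thm:factone} construction in this instance or, more economically, by replacing both eliminations with appeals to the already-proved lifting property of $i_f$ against display maps, which is what the paper does; but as written the key step is asserted rather than available.
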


\begin{proof} For part $(i)$, let $f : \Phi \rightarrow \Psi$ be in $\mathcal{A}$. By Lemma~\ref{thm:char}
$f$ is a type-theoretic
injective equivalence, so let us assume
\[
(y \in \Psi) \ s(y) \in \Phi
\]
and the judgements in~(\ref{equ:injeqv1}), (\ref{equ:injeqv2}),~(\ref{equ:injeqv3}). We  show that
$\mathcal{F}(s) : \mathcal{F}(\Psi) \rightarrow \mathcal{F}(\Phi)$ provides a quasi-inverse to
$\mathcal{F}(f) : \mathcal{F}(\Phi) \rightarrow \mathcal{F}(\Psi)$. First of all, we have a natural isomorphism
$\mathcal{F}(s) \circ \mathcal{F}(f) \Rightarrow 1_{\mathcal{F}(\Psi)}$ with components given by the maps $[\varepsilon_b] : f(s(b)) \rightarrow b$. To establish naturality, we need to show that for every $q \in
\Id(b_0, b_1)$, there is a homotopy between $q \circ \varepsilon_{b_0}$ and $\varepsilon_{b_1} \circ fs(q)$,
which can be proved by elimination on $q \in \Id(b_0, b_1)$. Secondly, we have
\[
\mathcal{F}(s) \circ \mathcal{F}(f)  = \mathcal{F}(s \circ f) = \mathcal{F}(1_\Phi) = 1_{\mathcal{F}(\Phi)} \, .
\]
which also shows that $\mathcal{F}(f)$ is injective on objects, as required. For part $(ii)$, let $f : \Phi \rightarrow
\Psi$ be a type-theoretic normal isofibration, and assume to have
\[
(x \in \Phi, y \in \Psi, u \in \Id(fx, y) ) \ j(x,y,u) \in \Phi
\]
and judgements as in~(\ref{equ:fib1}) and~(\ref{equ:fib2}).
By~(\ref{equ:fib1}), the map $j : \Id(f) \rightarrow \Phi$ makes the following diagram commute
\begin{equation}
\label{equ:last}
\vcenter{\hbox{\xymatrix{
 \Id(f) \ar[r]^(.55){j} \ar[dr]_{p_f}  & \Phi \ar[d]^{f} \\
                   & \Psi }}}
\end{equation}
To prove that $\mathcal{F}(f) : \mathcal{F}(\Phi) \rightarrow
\mathcal{F}(\Psi)$ is a Grothendieck fibration, we will need an auxiliary
result. We claim that there is a term
\begin{equation}
\label{equ:idfterm}
\big(x \in \Phi, y \in \Psi, u \in \Id(fx, y)\big) \ m(x, y, u) \in  \Id_{\Id(f)} \big( (x,f x, 1_{fx}), (x,y,u) \big)\text.
\end{equation}
To see this, let us write $\Omega(x, y, u) \defeq \Id_{\Id(f)} \big( (x,f x,
1_{fx}), (x,y,u) \big)$. Then we have a commutative square
\[
\xymatrix{
\Phi  \ar[r]^-k \ar[d]_{i_f} & (\Id(f), \Omega) \ar[d]^p \\
\Id(f) \ar@{=}[r]  & \Id(f) }
\]
where $k$ is given by $k \defeq (x, fx, 1_{fx}, 1_{(x, fx, 1_fx)})$, and $p$ is
the evident dependent projection. Because $i_f \in \mapA$ and $p \in \mapB$, we
have by Weak Orthogonality a diagonal filler $m \colon \Id(f) \to (\Id(f),
\Omega)$, as required for~(\ref{equ:idfterm}).

We now show that $\mathcal{F}(f)$ is a Grothendieck fibration. Let us consider
a map $\beta : f(a) \rightarrow b$ in $\mathcal{F}(\Psi)$. Let $p \in \Id(f(a),
b)$  such that $\beta = [p]$. Note that such a $p$ exists, but it is neither
unique nor determined canonically. We then define $a'  \defeq j(a,b,p)$. Next,
we need to define $\alpha : a \rightarrow a'$ in $\mathcal{F}(\Phi)$.
By~(\ref{equ:idfterm}), we have a term
\[
m(a, b, p) \in  \Id \big( (a,f a, 1_{fa}), (a,b,p) \big)\text.
\]
We define a map $\theta : (a, fa, 1_{fa}) \rightarrow (a, b, p)$ in
$\mathcal{F}(\Id(f))$ by letting $\theta = [m(a, b, p)]$. The required map
$\alpha : a \rightarrow a'$ can then be defined as the result of an application
of the functor $\mathcal{F}(j)$ to $\theta$. This has the required domain and
codomain, since $a = j(a,f(a), 1_{f(a)}) \in \Phi$ by (\ref{equ:fib2}), and $a'
= j(a,b,p) \in \Phi$ by the definition set earlier. Furthermore, the
commutativity of the diagram in~(\ref{equ:last}) implies that the result of
applying  $\mathcal{F}(f)$ to $\alpha$ is  $\beta$, as required.
\end{proof}

We can now compare the factorisations in $\classT$ and in $\Gpd$.

\begin{proposition} \label{thm:factgpd}
For every context morphism $f : \Phi \rightarrow \Psi$, we can define an equivalence
surjective on objects $\sigma_f : \mathcal{F}(\Id(f)) \rightarrow \Path(\mathcal{F}(f))$ making the following diagram commute
\[
\xymatrix{
\mathcal{F}(\Phi) \ar[rr]^{i_{\mathcal{F}(f)}} \ar[d]_{\mathcal{F}(i_f)} & &
\Path( \mathcal{F}(f)) \ar[d]^{p_{\mathcal{F}(f)}} \\
 \mathcal{F}(\Id(f)) \ar[urr] \ar[rr]_{\mathcal{F}(p_f)}  & & \mathcal{F}(\Psi) }
\]
\end{proposition}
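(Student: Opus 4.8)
The plan is to build $\sigma_f$ directly from the projections out of $\Id(f)$ and then to deduce that it is an equivalence as a formal consequence of the theorem just proved, rather than by analysing its action on hom-sets. Write $q \colon \Id(f) \to \Phi$ for the dependent projection $(x)$, so that $\Id(f)$ carries the two projections $q$ and $p_f = (y)$, and note $q \circ i_f = 1_\Phi$ and $p_f \circ i_f = f$. Recall also that a morphism of $\Path(\mathcal{F}(f))$ from $(a,b,\beta)$ to $(a',b',\beta')$ is a pair $(\alpha \colon a \to a',\, \gamma \colon b \to b')$ with $\gamma \circ \beta = \beta' \circ \mathcal{F}(f)(\alpha)$; in other words $\Path(\mathcal{F}(f))$ is the comma category $\mathcal{F}(f) \downarrow \mathcal{F}(\Psi)$, so a functor into it from a groupoid $\mathcal{G}$ amounts to a pair of functors $G_1 \colon \mathcal{G} \to \mathcal{F}(\Phi)$, $G_2 \colon \mathcal{G} \to \mathcal{F}(\Psi)$ together with a natural transformation $\mathcal{F}(f) \circ G_1 \Rightarrow G_2$, with $p_{\mathcal{F}(f)}$ reading off $G_2$ and with $i_{\mathcal{F}(f)}$ corresponding to the triple $(1_{\mathcal{F}(\Phi)},\, \mathcal{F}(f),\, \iota)$ for $\iota$ the identity transformation.

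First I would produce a natural transformation $\mu \colon \mathcal{F}(f) \circ \mathcal{F}(q) \Rightarrow \mathcal{F}(p_f)$ of functors $\mathcal{F}(\Id(f)) \to \mathcal{F}(\Psi)$, with component at an object $(a,b,p)$ given by the map $[p] \colon f(a) \to b$ of $\mathcal{F}(\Psi)$. Naturality of $\mu$ says that for every path $w$ in $\Id(f)$ from $(a_0,b_0,p_0)$ to $(a_1,b_1,p_1)$ the paths $p_f(w) \circ p_0$ and $p_1 \circ f(q(w))$ of $\Psi$ are homotopic; this I would derive by applying the identity elimination rule to $w$, the reflexivity case amounting to the computation $1_{b} \circ p = p \simeq p \circ 1_{f(a)}$ furnished by Lemmas~\ref{thm:sym} and~\ref{thm:higher}. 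I then let $\sigma_f$ be the functor classified by the triple $(\mathcal{F}(q),\, \mathcal{F}(p_f),\, \mu)$; concretely it sends $(a,b,p)$ to $(a,b,[p])$ and a morphism $[w]$ to the pair $\big([q(w)],\, [p_f(w)]\big)$.

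The two commutativities are then formal: $p_{\mathcal{F}(f)} \circ \sigma_f = \mathcal{F}(p_f)$ since composing with $p_{\mathcal{F}(f)}$ reads off the second component, while $\sigma_f \circ \mathcal{F}(i_f)$ is classified by $\big(\mathcal{F}(q \circ i_f),\, \mathcal{F}(p_f \circ i_f),\, \mu \cdot \mathcal{F}(i_f)\big) = \big(1_{\mathcal{F}(\Phi)},\, \mathcal{F}(f),\, \iota\big)$, using $q \circ i_f = 1_\Phi$, $p_f \circ i_f = f$ and $\mu_{(a, f(a), 1_{f(a)})} = [1_{f(a)}] = 1_{f(a)}$; this is precisely the classifying data of $i_{\mathcal{F}(f)}$. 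That $\sigma_f$ is surjective on objects is clear, since an object $(a,b,\beta)$ of $\Path(\mathcal{F}(f))$ has $\beta = [p]$ for some global element $p$ of $\Id_\Psi(f(a),b)$, and then $\sigma_f(a,b,p) = (a,b,\beta)$.

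It remains to see that $\sigma_f$ is an equivalence. Since $i_f \in \mapA$ by Lemma~\ref{thm:factone}, the theorem above shows that $\mathcal{F}(i_f) \colon \mathcal{F}(\Phi) \to \mathcal{F}(\Id(f))$ is an equivalence of groupoids; and $i_{\mathcal{F}(f)}$ is an equivalence, being the injective-equivalence part of the mapping-space factorisation in the natural model structure on $\Gpd$. From $\sigma_f \circ \mathcal{F}(i_f) = i_{\mathcal{F}(f)}$ and two-out-of-three for equivalences of categories it then follows that $\sigma_f$ is an equivalence. The only step that genuinely uses identity types is the naturality of $\mu$ — equivalently, the fact that $\sigma_f$ really does take values in $\Path(\mathcal{F}(f))$ and is functorial there — and this is the anticipated main obstacle, settled by the identity elimination rule together with Lemma~\ref{thm:higher}; everything else is comma-category bookkeeping and an appeal to the preceding theorem.
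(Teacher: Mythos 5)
Your construction of $\sigma_f$ coincides with the paper's --- the same assignment $(a,b,p)\mapsto(a,b,[p])$ on objects, with the action on morphisms induced by the two projections out of $\Id(f)$ --- and your verification is correct; the paper's own proof consists of this definition followed by ``direct calculations show the required properties''. Where you genuinely diverge is in how the equivalence property of $\sigma_f$ is obtained. The direct route implicit in the paper would be to check full faithfulness by hand, which means analysing homotopy classes of paths in the context $\Id(f)$ (essentially, that such a path is determined up to homotopy by its two projections together with the naturality square they satisfy). You instead observe that $\sigma_f\circ\mathcal{F}(i_f)=i_{\mathcal{F}(f)}$, that $\mathcal{F}(i_f)$ is an equivalence because $i_f\in\mapA$ (Lemma~\ref{thm:factone}) and the theorem preceding Proposition~\ref{thm:factgpd} sends $\mapA$-maps to injective equivalences, that $i_{\mathcal{F}(f)}$ is an equivalence by the standard mapping-space factorisation in $\Gpd$, and you conclude by two-out-of-three. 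This is a legitimate and arguably cleaner argument: it outsources the type-theoretic work to results already established, at the cost of being less explicit about what $\sigma_f$ does on hom-sets. The one point that still needs genuine type-theoretic input --- the naturality of $\mu$, equivalently that $\sigma_f$ is well defined and functorial into the comma category --- you correctly reduce to an identity elimination over the path $w$, with the reflexivity case supplied by Lemmas~\ref{thm:sym} and~\ref{thm:higher}; this is the same style of argument as in the proof of Proposition~\ref{thm:functor}. I see no gaps.
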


\begin{proof} The objects of $\mathcal{F}(\Id(f))$ are triples $(a,b,p)$, where
$p \in \Id(f(a), b)$. The objects of $\Path(\mathcal{F}(f))$ are triples $a, b, \alpha$,
where $\alpha$ is an arrow $\alpha : f(a) \rightarrow b$ in~$\mathcal{F}(\Phi)$. Thus,
$\sigma_f$ can be defined as mapping $(a,b,p)$ to $(a, b, [p])$. Direct calculations
show the required properties.
\end{proof}

Let us write $\mathbf{J}$ for the groupoid with two objects and an isomorphism between them.
As a special case of Proposition~\ref{thm:factgpd}, we obtain that for every context~$\Phi$, there
is a surjective equivalence between
\[
\sigma : \mathcal{F}(x \in \Phi, y \in \Phi, u \in \Id(x,y))
\rightarrow \mathcal{F}(x \in \Phi)^\mathbf{J} \, ,
\]
where $\mathcal{F}(x \in \Phi)^\mathbf{J}$ can be seen as the groupoid of isomorphisms in
 $\mathcal{F}(x \in \Phi)$.

\section*{Acknowledgements} We are grateful to Michael Warren for suggesting to use display
maps rather than dependent projections in the definition of the identity type weak factorisation
system, thus simplifying our original definition.  We would also like to thank the referees for their useful comments.
The research described here was started during a visit of the first-named author to the Department of Mathematics of the University of Uppsala and discussed further during the workshop Categorical and Homotopical Structures in Proof Theory, held
at the Centre de Recerca Matem\`atica. We gratefully acknowledge the support of both institutions.  The second-named author is supported by a Research Fellowship of St John's College, Cambridge, and a Marie-Curie Intra-European Fellowship, Project no.~040802. The use of Paul Taylor's prooftree package is also
acknowledged.
\appendix

\section{Structural rules for dependent  type theories}
\label{app:rules}

\subsubsection*{Axiom, weakening and substitution}  The rule $(*)$ has the side-condition that the variable $x$ should not appear as a free variable in $\Gamma$ or $\Delta$.
When stating the rules below,~$\mathcal{J}$ stands for an arbitrary judgement.  \medskip

\[
\begin{prooftree}
\quad
\justifies
(\Gamma, x \in A) \ x \in A
\end{prooftree}
\]

\medskip

\[
\begin{array}{c}
\begin{prooftree}
(\Gamma, \Delta) \ \mathcal{J}   \quad
(\Gamma) \ A \in \type
\justifies
(\Gamma, x \in A, \Delta) \ \mathcal{J}
\using
(*)
\end{prooftree}  \qquad
\begin{prooftree}
(\Gamma, x \in A, \Delta) \ \mathcal{J}  \quad
(\Gamma) \ a \in A
\justifies
(\Gamma, \Delta[a/x] ) \ \mathcal{J}[a/x]
\end{prooftree}
\end{array}
\]

\medskip

\subsubsection*{Reflexivity, symmetry, and transitivity of definitional equality of types} \hfill

\[
\begin{array}{ccc}
\begin{prooftree}
 A \in \type
\justifies
 A = A
\end{prooftree} \qquad  &
\begin{prooftree}
 A  = B
\justifies
B = A
\end{prooftree} \qquad &
\begin{prooftree}
A  = B \quad
 B = C
\justifies
A = C
\end{prooftree}
\end{array}
\]

\medskip

\subsubsection*{Reflexivity, symmetry, and transitivity of definitional equality of objects}  \hfill

\[
\begin{array}{ccc}
\begin{prooftree}
 a \in A
\justifies
 a = a \in A
\end{prooftree} \qquad &
\begin{prooftree}
 a = b \in A
\justifies
b = a \in A
\end{prooftree} \qquad &
\begin{prooftree}
 a = b \in A \quad
b = c \in A
\justifies
a = c \in A
\end{prooftree}
\end{array}
\]

\medskip

\subsubsection*{Compatibility rules for definitional equality} \hfill

\[
\begin{array}{cc}
\begin{prooftree}
 a \in A \quad
 A = B
\justifies
 a \in B
\end{prooftree} \qquad &
\begin{prooftree}
 a  = b \in A \quad
A = B
\justifies
 a  = b \in B
\end{prooftree}
\end{array}
\]

\bibliographystyle{amsplain}
\bibliography{refs}

\end{document}